\def\F{\mathbb{F}}
\def\N{\mathbb{N}}
\def\Z{\mathbb{Z}}
\newcommand{\CalC}{\mathcal{C}}
\newcommand{\CalO}{\mathcal{O}}
\newtheorem{theorem}{Theorem}[section] 
\newtheorem{lemma}[theorem]{Lemma}     
\newtheorem{corollary}[theorem]{Corollary}
\newtheorem{proposition}[theorem]{Proposition}
\newtheorem{remark}[theorem]{Remark}
\newcommand{\Hom}{\operatorname{Hom}}
\newcommand{\zp}{\Z_p}
\newcommand{\notty}{\mathfrak{N}}
\newcommand{\powerseries}{\F_p[\![t ]\!]}
\newcommand{\kappapower}{\kappa[\![t ]\!]}
\newcommand{\laurentseries}{\F_p(\!(t )\!)}
\newcommand{\kappaseries}{\kappa(\!(t )\!)}
\newcommand{\maxi}{\maxideal}
\newcommand{\npg}{U_1}
\newcommand{\maxideal}{\mathfrak{M}}
\newcommand{\zmodpnz}{\Z/p^n\Z}
\newcommand{\FZ}{\mathfrak{Z}}
\newcommand{\FM}{\mathfrak{M}}
\newcommand{\pmodp}{\pmod{p}}
\newcommand{\ub}[1]{{}_{#1}}
\newcommand{\wt}[1]{\widetilde{#1}}
\newenvironment{customthm}[1]
  {\innercustomthm}
  {\endinnercustomthm}
\begin{document}

\title{Torsion elements of the Nottingham group of order $p^2$}
\author{Chun Yin Hui and Krishna Kishore}
\thanks{C.Y. Hui is supported by China's Thousand Talents Plan: The Recruitment Program for Young Professionals.}
\subjclass{11S31 (primary), 37P35, 20D15, 20E18 (secondary).}

\maketitle

\begin{abstract}
We establish an explicit upper bound $B(p,l,m)$, depending on $p,l,m$, on the number of conjugacy classes of order $p^2$ torsion elements $u$ of type $\langle l,m\rangle$ of the Nottingham group defined over the prime field of characteristic $p >0$. In the cases where $l < p $, the number of conjugacy classes
of type $\langle l,m\rangle$  coincides with $B(p,l,m)$.
Moreover, we give a criterion on when
$u$ and $u^n$ are conjugate.
\end{abstract}

\section{Introduction}\label{intro}
This paper is a continuation of \cite{Ki} where the second author classified elements of the Nottingham group over the \textit{prime} field $\F_p$ that are of order $p^2$ and type $\langle 2,m \rangle$. In order to discuss some preliminary notions and to state our main results we need some notation. Let $\kappa$ be a finite field of characteristic $p > 0$,  let  $K := \kappa(\!( t )\!)$ be the field of Laurent-series in one variable $t$ over 
$\kappa$ and $\CalO_K$ its ring of integers, and $\maxideal$ the maximal ideal of $\CalO_K$ generated by the uniformizer
$t$. Consider the group of automorphisms of $K$ that fix $\kappa$, and in turn consider the subgroup of wild automorphisms namely those which map the uniformizer $t$ to the product $tz$ for some $z$ in the principal unit group $U_1 := 1 +  \maxideal$. The set $\Set{t z | z \in U_1}$ corresponding to this subgroup equipped with composition of power series is a group, called the \textit{Nottingham group} $\notty_\kappa$ over $\kappa$; for a detailed description about the Nottingham group see \cite{Ca} and \cite{Ca1}.

The classification of elements of order $p$ over $\kappa$ is due to Klopsch \cite{Kl}. On the other hand, by associating conjugacy classes of torsion elements of order $p^n$ in $\notty_\kappa$  with continuous surjective characters $\chi: U_1 \to \Z/p^n \Z$ up to \emph{strict equivalence},  Lubin \cite{Lu} deduced the result of Klopch as a particular case, but most importantly constructed a framework to understand torsion elements of high order. This is due to the fact that each surjective $\chi$, in turn, is associated with a sequence of integers $\langle b^{(0)}, \ldots, b^{(n-1)} \rangle$, called \textit{the break sequence of $\chi$ or the \emph{type} of $\chi$}, where $b^{(j-1)}$ is defined as the largest positive integer $b$ such that 
$$
\chi(1 + \maxi^b) \not \subset p^j \Z/p^n \Z.
$$
Since the type of $\chi$ is invariant under strict equivalence,
the analysis of the conjugacy classes of torsion elements of  order $p^n$ reduces to the analysis of the strict equivalence classes of a given type.

Now, let us restrict our attention to the case where $\kappa = \F_p$, the prime finite field of characteristic $p > 0$. We drop the subscripts $\kappa$ from the notation, for instance instead of $\notty_{\F_p}$ we simply denote it as $\notty$. The continuous characters $\Hom_{\Z_p}^{\textrm{cont}}(U_1, \Z/p^n \Z)$ form a $\Z_p$-module equipped with the action of the Nottingham group $\notty$ that acts on the left in a manner compatible with $\Z_p$-module structure, namely as  
$
{}_u \chi(f(t)) := \chi( f \circ u(t)) 
$
for $f(t) \in \npg = 1 + t \powerseries$ and $u \in \notty$.  Two such characters $\chi, \psi \in \Hom_{\Z_p}^{\textrm{cont}}(U_1, \Z/p^n \Z) $ are said to be \textit{strictly equivalent}, denoted $\chi \simeq \psi$, if there exists an element $u \in \notty$ such that $\psi = \ub{u} \chi$ and $u(t)/t \in \ker \chi$; if only the former condition holds then they are said to be \emph{weakly equivalent}, denoted  $\chi \sim \psi$. Both relations $\simeq$ and $\sim$ are equivalent relations on $\Hom_{\Z_p}^{\textrm{cont}}(U_1, \Z/p^n \Z)$; see \cite{Lu}. Let us note here that, given a surjective character $\chi\in \Hom_{\Z_p}^{\textrm{cont}}(U_1, \Z/p^n \Z)$, characters strictly (resp. weakly) equivalent to $\chi$ are surjective and are of the same type.

In an unpublished work, Lubin classified torsion elements of order $p^2$ of type $\langle 1, m \rangle$ over \textit{any} finite field; see Theorem $3.6$  in \cite{Ki}.  On the other hand, the second author classified weak equivalence classes of order $p^2$ of type $\langle 2,m \rangle$ over any prime finite field and gave bounds on the number of strict equivalent classes; see Theorem \ref{prior_thm_3}. The main goal of this article is to give bounds on the number of conjugacy classes of torsion elements of order $p^2$ of any type over any \textit{prime} finite field.

Let $\Set{E_k : = 1 + t^k}_{p \nmid k}$ be a topological basis of the principal unit group $U_1 = 1 + \FM$, and $\{\FZ_i\}_i$ be the basis of $\Hom_{\Z_p}^{\textrm{cont}}(1 + \FM, \Z/p^2 \Z)$ dual to $E_i$. Then any character $\chi \in \Hom_{\Z_p}^{\textrm{cont}}(1 + \FM, \Z/p^2 \Z)$ of type $\langle l, m \rangle$ has the following expansion, called the \emph{standard expansion}:
\begin{equation}\label{intro_eq}
\chi = \sum_{1 \leq i \leq l, p \nmid i} x_i \FZ_i  + \sum_{\substack{1 \leq j \leq m, p \nmid j}} a_j. p \FZ_j,
\end{equation}
where 
$x_i, a_j$ belong to $\{0,1,...,p-1\}$ for all $i,j$. The coefficient $x_l$ is nonzero, and if $p \nmid m$ then $a_m$ is nonzero too. Now we can state some of our main results. \\

We show that every strict equivalence class contains a character of a special form. 

\begin{theorem}\label{main_thm}
Let $[\chi]$ be a strict equivalence class of type $\langle l, m \rangle$.  Then there exists a character in the class $[\chi]$ whose standard expansion is 
\begin{equation}\label{intro_reduction}
x_l \FZ_l + \sum_{ m-l \leq j \leq m, p \nmid j} b_j \cdot p \FZ_j, 
\end{equation}
where $x_l \neq 0$ and
if $p \nmid m$, then $b_m \neq 0$. 
The character in \eqref{intro_reduction} is said to be in \textit{reduced form}.
\end{theorem}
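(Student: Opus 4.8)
The plan is to exploit the fact, implicit in the definition of the action, that each $u\in\notty$ acts $\Z_p$-linearly and \emph{triangularly} on the coordinate vector $(\chi(E_k))_{p\nmid k}$. Writing $u(t)/t=1+\sum_{D\ge 1}c_D t^{D}$ with $c_D\in\F_p$, one computes $E_j\circ u=1+u(t)^j=1+t^{j}+\sum_{e\ge 1}w_e^{(j)}t^{\,j+e}$, where $w_e^{(j)}=[t^{e}](1+\sum_D c_D t^{D})^{j}$, and then evaluates $\chi$ by peeling off successive factors $E_{j+e}$. The resulting value ${}_{u}\chi(E_j)=\chi(E_j\circ u)$ depends only on the old coordinates at indices $\ge j$, with $1$ on the diagonal. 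The single mechanism that powers the whole argument is that unit coordinates leak into the $p$-part through the Frobenius identity $E_k^{p}=(1+t^{k})^{p}=1+t^{pk}$, so that
\[
\chi(1+t^{pk})=p\,\chi(E_k)=p\,x_k\quad\text{in }\Z/p^{2}\Z .
\]
Hence a monomial $t^{\,j+e}$ with $j+e=pk$ contributes $w_e^{(j)}\,p\,x_k$ to the $p$-part of ${}_{u}\chi(E_j)$. I would record these transition formulas first as the computational engine, and also note the structural inequality $m\ge pl$, forced by $\chi(1+t^{pl})=p\,x_l\neq 0$; this guarantees $m-l\ge l$, so that the two reduction ranges below are consistent.

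I would then carry out the reduction in two stages. \emph{Stage 1 (unit part).} For each $j<l$ with $p\nmid j$ I apply an element of depth $l-j$: the leading transition coefficient killing $x_j$ is $j\,c_{l-j}\,x_l$, a unit since $p\nmid j$, $p\nmid l$ and $x_l\neq0$, so $c_{l-j}$ can be solved for. Iterating from $e=l-j=1$ upward makes each step disturb only indices $>l$ in the unit part (where the coordinates already vanish), so previously cleared coefficients stay cleared; the mod-$p$ reduction of $\chi$ is thereby brought to $x_l\FZ_l$, which is exactly the order-$p$ normal form. These elements have depth $<l$ and so generically violate $u(t)/t\in\ker\chi$; I restore strictness at each step by adding a suitable multiple of $t^{l}$ to $u$, using $x_l$ as the unit ``knob'' at index $l$ to cancel $\chi(u(t)/t)\bmod p$. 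This correction reaches only indices $>l$ in the unit part, hence leaves Stage 1 intact and only perturbs the $p$-part, which Stage 2 handles.

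\emph{Stage 2 ($p$-part).} Now I work inside the stabilizer of $x_l\FZ_l$. The stabilizing condition $\sum_e w_e^{(j)}x_{j+e}\equiv0\pmod{p}$ forces $c_1=\cdots=c_{l-1}=0$, i.e.\ depth $\ge l$, and then the strict-equivalence condition $\chi(u(t)/t)\equiv c_l x_l\equiv0\pmod{p}$ forces $c_l=0$ as well. Consequently every available transition shifts indices by at least $l+1$. Taking the top $p$-part coordinate $a_m$ (nonzero since $p\nmid m$) as pivot, clearing $a_j$ requires the jump $m-j$, which is admissible precisely when $m-j>l$, that is $j<m-l$; then $c_{m-j}$ (index $>l$, free) kills $a_j$, and clearing top-down over $p\nmid j<m-l$ leaves support exactly on $m-l\le j\le m$, yielding \eqref{intro_reduction}. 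The single residual $p$-level strictness condition is met with a spare high-order coefficient, e.g.\ via the Frobenius image of $x_l$ at index $pl$; when $p\mid m$ the pivot $a_m$ is likewise replaced by $\chi(1+t^{pl})=p\,x_l$, which supplies the top of the $p$-part at $m=pl$.

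The main obstacle is the strict-equivalence constraint $u(t)/t\in\ker\chi$, not the bare action: the naive reducing elements do not satisfy it and must be repaired throughout by spending higher-order coefficients of $u$, and one must check there remain enough independent parameters to perform all clearings simultaneously with the single kernel condition. More conceptually, it is exactly this constraint, through $c_l=0$, that forbids the jump of size $l$ and pins the threshold at $m-l$ rather than $m-l+1$ (weak equivalence would permit $c_l\neq0$ and additionally clear $a_{m-l}$). I therefore expect the delicate bookkeeping to lie in threading strictness through both stages, while the existence of the reduced form itself follows from the triangular, Frobenius-driven transition formulas above.
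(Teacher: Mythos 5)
Your proposal is, in substance, the paper's own proof: Stage 1 is Lemma \ref{lem7} (which the paper gets by citing Lubin's order-$p$ normal form, Theorem \ref{lubin_lemma}, together with Lemma \ref{lubin_lemma_2}, and which you instead reprove by hand via the triangular transition formulas), and Stage 2 is exactly Proposition \ref{prop7}: for $p \nmid j$, $j < m-l$, an element of depth $m-j > l$ clears $b_j$ against the pivot $\chi(E_m) = p\,b_m$, which by the Frobenius identity equals $p\,x_l$ when $m = pl$, with a second free exponent restoring the kernel condition. The mechanism, the threshold $m-l$, and the top-down ordering all coincide with the paper's.

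Two local slips deserve flagging, though neither is fatal to existence. First, your strictness knob ``at index $pl$'' fails whenever $m > pl$: multiplying $u(t)/t$ by $(1+t^{pl})^e$ perturbs ${}_u\chi(E_j)$ by $ej\,\chi(E_{j+pl}) + \cdots$, and for $j = m - pl$ (a positive index with $p \nmid j$, lying inside the cleared zone $[1, m-l-1]$, since $m>pl$ forces $p\nmid m$ by Proposition \ref{bs_lemma}) the value $\chi(E_{j+pl}) = \chi(E_m) = p\,b_m$ is nonzero, so any $e \neq 0$ un-clears a coefficient you have already killed. The robust knob is $(1+t^m)^e$, as in Proposition \ref{prop7}: its perturbations land at indices strictly greater than $m$, and $\chi(E_m)$ is a unit multiple of $p$ in all cases ($p \nmid m$ by the break condition; $m = pl$ by Frobenius); when $p \mid m$ your knob and the paper's coincide, which is the only case where yours is safe. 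Second, your assertion that stabilizing $x_l \FZ_l$ modulo $p$ forces $c_1 = \cdots = c_{l-1} = 0$ is false without the hypothesis $l < p$: the leading obstruction to a depth-$\widetilde{k}$ perturbation is $n_{\widetilde{k}}(l - \widetilde{k})\,x_l$, which vanishes when $p \mid (l - \widetilde{k})$, and the paper's remark after Theorem \ref{exact} (with $p=2$, type $\langle 5, 15 \rangle$, $u = t(1+t^3+t^4)(1+t^{15})^e$) exhibits exactly such a stabilizing element of depth $3 < l$. Fortunately this claim is not load-bearing here: your Stage 2 clearing elements have depth $m - j > l$ by construction and such elements automatically preserve the unit part, so the false claim contaminates only your side discussion of why the threshold is pinned at $m-l$ --- a uniqueness question belonging to Theorem \ref{exact} (where $l<p$ is genuinely needed), not to the existence statement being proved.
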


Let $B(p,l,m)$ be  the number of type $\langle l, m \rangle$ reduced forms \eqref{intro_reduction}.
In the case where $l < p$, 
$B(p,l,m)$ is equal to the number of strict equivalence classes of type $\langle l, m \rangle$.

\begin{theorem}\label{exact}
Suppose $l < p$. Then each strict equivalence class of type $\langle l, m \rangle$ has precisely one representative in the reduced form \eqref{intro_reduction} in Theorem \ref{main_thm}.
\end{theorem}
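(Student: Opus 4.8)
The existence of a reduced representative in each class is Theorem \ref{main_thm}, so the content here is \emph{uniqueness}. The plan is to fix two characters $\chi,\chi'$ of type $\langle l,m\rangle$, both in the reduced form \eqref{intro_reduction}, together with $u\in\notty$ realizing a strict equivalence $\chi'={}_u\chi$ with $z:=u(t)/t\in\ker\chi$, and to prove $\chi=\chi'$. I would work throughout in the coordinates $\chi(E_k)\in\Z/p^2\Z$, $p\nmid k$. Writing $u(t)=tz$ and expanding $1+u(t)^k=\prod_{p\nmid j}E_j^{\gamma_j(k)}$ in the topological basis with exponents $\gamma_j(k)\in\Z_p$, the action becomes the triangular formula ${}_u\chi(E_k)=\sum_{p\nmid j}\gamma_j(k)\,\chi(E_j)$, where $\gamma_k(k)\in\Z_p^{\times}$ and, because $1+u(t)^k$ has $t$-valuation $k$, every factor $E_j$ with $j<k$ can occur only with $\gamma_j(k)\in p\Z_p$. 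For $\chi$ in reduced form the only nonzero coordinates are $\chi(E_l)$ (with reduction $x_l\ne0$) and $\chi(E_j)=b_j p$ for $p\nmid j\in[m-l,m]$, so the whole computation is governed by how $z$ moves these few coordinates.

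First I would treat the reduction mod $p$, which sees only the first break: here $\bar\chi=x_l\bar\FZ_l$ and the equations ${}_u\bar\chi(E_k)=\bar{\chi'}(E_k)=x_l'\delta_{kl}$ are available for every $p\nmid k$. Applying them for $k<l$ and inducting on the valuation of $z-1$, the triangular formula forces the coefficients of $z$ to vanish one at a time: at the inductive step the obstruction to killing the coefficient $c_s$ is a single factor of the form $(l-s)c_s$ (or $s\,c_s$ in the symmetric case $l=2s$), and this is where the hypothesis $l<p$ enters decisively, since then $1\le l-s\le l-1<p$ guarantees the multiplier is a unit mod $p$. This yields $z\equiv 1\pmod{t^{l}}$, and the diagonal $k=l$ equation gives $x_l=x_l'$. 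The strict-equivalence constraint then contributes its mod $p$ shadow $\bar\chi(z)=c_l x_l=0$, and $x_l\ne0$ forces $c_l=0$; hence $z\equiv1\pmod{t^{l+1}}$, so the first correction to $E_k$ produced by $u$ appears no earlier than degree $k+l+1$.

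With $z\equiv1\pmod{t^{l+1}}$ in hand I would turn to the second break. For $k\in[m-l,m]$ the first-order (mod $p$) corrections all land at indices $\ge k+l+1>m$, where $\chi$ vanishes, so among the degree-one contributions only $j=k$ survives and returns $b_k p$. The one genuinely dangerous term is the \emph{second-order} contribution through the $p$-power relation $E_l^{\,p}=E_{pl}$: since $l<k$ forces $\gamma_l(k)\in p\Z_p$, the factor $\gamma_l(k)\chi(E_l)=p\,\gamma_l^{(1)}(k)\,x_l$ still reaches the $p$-part and threatens to replace $b_k$ by $b_k+\gamma_l^{(1)}(k)\,x_l$. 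The crux of the proof, and the step I expect to be the main obstacle, is to show that every such $\gamma_l^{(1)}(k)$ vanishes. My plan is to pin these down by combining the full (not merely mod $p$) kernel condition $\chi(z)=0$, whose $p$-level records exactly a linear relation among the $E_l^{\,p}$-type coordinate of $z$ and the $b_j$, with the vanishing equations ${}_u\chi(E_k)=\chi'(E_k)=0$ at the indices $l<k<m-l$; these assemble into a triangular linear system over $\F_p$ whose diagonal multipliers are again units precisely because $l<p$, and solving it upward forces all the $\gamma_l^{(1)}(k)$ to be zero. The delicate book-keeping case is $m\le 2l$, where $l$ itself lies in the range $[m-l,m]$ and $\chi(E_l)=x_l+b_l p$ carries both digits at once, so the $E_l^{\,p}$ correction interacts directly with $b_l$; here the kernel condition must be used at full strength to separate the two digits.

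Granting that all second-order shifts vanish, the surviving equations read $b_k' p=b_k p$ for every $p\nmid k\in[m-l,m]$, whence $b_k'=b_k$; together with $x_l=x_l'$ this gives $\chi=\chi'$, so the reduced representative is unique. I would finally record that the same computation shows the map from reduced forms \eqref{intro_reduction} to strict equivalence classes is injective, so that for $l<p$ the quantity $B(p,l,m)$ is exactly the number of strict equivalence classes of type $\langle l,m\rangle$.
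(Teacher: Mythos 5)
Your proposal is essentially correct in mechanism and, in its first stage, coincides with the paper's own argument run in a different order: the paper proceeds by contradiction, taking the largest index $\widetilde{j}$ with $b_{\widetilde{j}}\neq b'_{\widetilde{j}}$ and the smallest index $\widetilde{k}$ with a nonzero coefficient in $u$, deducing $0<\widetilde{k}\leq m-\widetilde{j}\leq l<p$, excluding $\widetilde{k}=l$ by the kernel condition, and then reaching exactly your contradiction ${}_u\chi(E_{l-\widetilde{k}})\equiv n_{\widetilde{k}}(l-\widetilde{k})x_l\not\equiv 0\pmodp$ against the reduced form of $\chi'$, with $l<p$ making the multiplier a unit. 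Your inductive normalization to $u(t)/t\in U_{l+1}$, the identification $x_l=x'_l$, and the use of the mod-$p$ kernel condition to kill the degree-$l$ coefficient are all sound and are the same computation. Note that the paper's extremal organization buys something: once some $b_{\widetilde{j}}\neq b'_{\widetilde{j}}$ exists, the bound $\widetilde{k}\leq m-\widetilde{j}\leq l$ already forces the contradiction at the \emph{first} break, so no separate analysis of the second break is ever needed.

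The genuine defect in your write-up is that the step you flag as ``the crux'' --- the vanishing of the second-order coefficients $\gamma_l^{(1)}(k)$ --- is left granted rather than proved, and the route you sketch for it (a triangular system built from the equations at indices $l<k<m-l$) could not deliver it: once $u(t)/t\in U_{l+1}$, those equations are automatically satisfied and carry no information. Fortunately the step is immediate and needs no system at all. For $k\geq m-l$ one has $E_k\circ u=E_k\cdot v$ with $v\in U_{k+l+1}$, and $k+l+1\geq m+1$; since $m$ is the final break of $\chi$, the very definition of the break sequence gives $\chi(U_{m+1})=0$, so every correction dies wholesale and ${}_u\chi(E_k)=\chi(E_k)$ exactly in $\Z/p^2\Z$. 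In particular your ``dangerous'' factor $E_{pl}=E_l^p$ cannot occur: all correction factors have degree at least $k+l+1>m\geq pl$ by Proposition \ref{bs_lemma}(b) (equivalently, an element of $U_{m+1}$ has $E_l$-exponent in $p^2\Z_p$ because $pl\leq m$, hence contributes $0$). The same containment disposes of your flagged case $m\leq 2l$ --- which under $l<p$ occurs only for $p=2$, $l=1$, $m=2$ --- since $\chi(E_l)=x_l+pb_l$ is then preserved with both digits at once, without any ``full strength'' use of the kernel condition; indeed, by Lemma \ref{lubin_lemma_2} only the mod-$p$ kernel condition is needed anywhere in the proof. With that one-line repair, your argument is complete and yields the theorem.
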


The condition $l<p$  of Theorem \ref{exact} is necessary. We will provide a counterexample for $p=2$ and type $\langle 5, 15 \rangle$. 
Finally, by combining
Theorems \ref{main_thm}, \ref{exact},
and Lubin \cite{Lu},
we obtain our main result on the torsion elements of $\notty$ of order $p^2$.


\begin{theorem}\label{intro_final_thm}
Let $d_{l,m}$ denote the number of conjugacy classes of elements of the Nottingham group $\notty$ that are of order $p^2$ and type $\langle l, m \rangle$. Then
\begin{equation}\label{ineq}
d_{l,m} \leq B(p,l,m)=p^k (p-1)^\epsilon
\end{equation}
where $k$ is the number of integers in $[m-l,m-1]$ incongruent to $0$ modulo $p$, and $\epsilon$ is equal to $1$ if $m$ is congruent to $0$ modulo $p$, and equal to $2$ otherwise. The inequality \eqref{ineq} is an equality if $l<p$.
\end{theorem}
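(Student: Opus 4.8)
The plan is to combine three ingredients—Lubin's correspondence, the structural Theorems \ref{main_thm} and \ref{exact}, and an elementary count—and to reduce the whole statement to a bookkeeping exercise. First I would invoke Lubin's framework from \cite{Lu}, recalled in the introduction: conjugacy classes of order-$p^2$ torsion elements $u \in \notty$ of type $\langle l, m \rangle$ are in bijection with strict equivalence classes of surjective characters $\chi \in \Hom_{\Z_p}^{\textrm{cont}}(U_1, \Z/p^2\Z)$ of type $\langle l, m \rangle$. Hence $d_{l,m}$ is exactly the number of such strict equivalence classes, and the entire problem is transported to the side of characters.

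Next I would set up the comparison map $\phi$ sending a reduced form \eqref{intro_reduction} to the strict equivalence class it represents. By Theorem \ref{main_thm}, every strict equivalence class of type $\langle l, m \rangle$ contains at least one character in reduced form, so $\phi$ is surjective; this gives $d_{l,m} \leq B(p,l,m)$, where $B(p,l,m)$ is by definition the number of reduced forms. When $l < p$, Theorem \ref{exact} asserts that each class has exactly one reduced representative, so $\phi$ is also injective, hence bijective, and the inequality becomes the equality $d_{l,m} = B(p,l,m)$.

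It then remains only to evaluate $B(p,l,m)$. A reduced form is determined by a choice of $x_l \in \{1, \ldots, p-1\}$ together with coefficients $b_j \in \{0, \ldots, p-1\}$ for each index $j$ in the range $[m-l, m]$ with $p \nmid j$, subject to $b_m \neq 0$ whenever $p \nmid m$. I would split into two cases. If $p \mid m$, the index $j = m$ is excluded by the coprimality condition, so the relevant indices are precisely those in $[m-l, m-1]$ incongruent to $0$ modulo $p$, namely $k$ of them, each free; this yields $p^k$ choices for the $b_j$ and, with $(p-1)$ choices for $x_l$, gives $B(p,l,m) = (p-1)\,p^k$, i.e. $\epsilon = 1$. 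If $p \nmid m$, the index $j = m$ is now included and carries the constraint $b_m \neq 0$, contributing $(p-1)$ choices, while the remaining $k$ indices in $[m-l, m-1]$ coprime to $p$ contribute $p^k$; together with the $(p-1)$ choices for $x_l$ this gives $B(p,l,m) = (p-1)^2 p^k$, i.e. $\epsilon = 2$.

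Since the genuine difficulty has already been absorbed into Theorems \ref{main_thm} and \ref{exact} and into Lubin's correspondence, the residual argument is essentially organizational, and I do not anticipate a serious obstacle. The only point demanding care is the boundary behaviour of the index set at $j = m$: one must check that defining $k$ over the truncated range $[m-l, m-1]$ exactly absorbs both the inclusion-or-exclusion of $j=m$ and the nonvanishing constraint on $b_m$, so that the two divisibility cases assemble uniformly into the single formula $p^k (p-1)^\epsilon$.
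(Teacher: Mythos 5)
Your proposal is correct and follows exactly the paper's own route: the paper likewise deduces the result from Lubin's correspondence (Theorem \ref{prior_thm_1}), the existence and uniqueness of reduced representatives (Theorems \ref{main_thm} and \ref{exact}), and the count of reduced forms, which the paper isolates as Corollary \ref{main_cor} with the same two-case analysis on $p \mid m$ versus $p \nmid m$ that you carry out. Your explicit surjective/injective comparison map $\phi$ is just a slightly more formal phrasing of the same argument, so there is nothing to add.
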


Let $n\in\N$ and $u\in\notty$ be a torsion element of order $p$. It is not difficult to see (e.g., by  Theorem \ref{prior_thm_1})
that $u$ and $u^n$ are conjugate in $\notty$ if and only if $u=u^n$.
As an application of the above results, the following group theoretic result related to the Nottingham group is of independent interest.

\begin{theorem}\label{conjugate}
Let $n\in\N$ and $u\in\notty$ be a torsion element of order $p^2$ and type $\langle l,m\rangle$. If
$u$ and $u^n$ are distinct elements,
then they are conjugate in $\notty$ if and only if $n\equiv 1 \pmodp$ and $(p,l,m)\neq (2,l,2l)$.
\end{theorem}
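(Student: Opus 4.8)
The plan is to transport the question into Lubin's correspondence between conjugacy classes of order-$p^2$ torsion elements and strict equivalence classes of surjective $\chi\colon U_1\to\Z/p^2\Z$; let $\chi$ of type $\langle l,m\rangle$ correspond to $u$. Since $u^n$ is torsion, it has order $p^2$ exactly when $p\nmid n$; if $p\mid n$ then $u^n$ has order dividing $p$ and so cannot be conjugate to $u$ (consistent with $n\not\equiv1\pmodp$), while $u=u^n$ precisely when $n\equiv1\pmodpp$. I may therefore assume $p\nmid n$ and $n\not\equiv1\pmodpp$ and put $j:=n$. Under the correspondence, replacing $u$ by $u^n$ post-composes the associated character with multiplication by a unit of $\Z/p^2\Z$ congruent to $n$ (the ambiguity $n\leftrightarrow n^{-1}$ being immaterial for the congruences modulo $p$ below), so $u$ and $u^n$ are conjugate if and only if $\chi\simeq j\chi$, the scalar multiple of $\chi$ in the $\Z_p$-module $\Hom_{\Z_p}^{\mathrm{cont}}(U_1,\Z/p^2\Z)$. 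Multiplication by the unit $j$ preserves both breaks and $\ker\chi$, so $j\chi$ again has type $\langle l,m\rangle$.

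First I would prove that $n\equiv1\pmodp$ is necessary. The low part $\overline{\chi}:=\chi\bmod p$ is a surjective character $U_1\to\Z/p\Z$ of break $l$, and $\overline{j\chi}=(j\bmod p)\overline{\chi}$. Any $g\in\notty$ realizing $\chi\simeq j\chi$ satisfies $g(t)/t\in\ker\chi\subseteq\ker\overline{\chi}$, hence also realizes $\overline{\chi}\simeq(j\bmod p)\overline{\chi}$. By the order-$p$ theory (Theorem \ref{prior_thm_1}, Klopsch), an order-$p$ element $v$ is conjugate to $v^k$ only when $v=v^k$; equivalently the scalar action of $(\Z/p\Z)^{\times}$ on strict equivalence classes of surjective order-$p$ characters is free. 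Thus $\overline{\chi}\simeq(j\bmod p)\overline{\chi}$ forces $j\equiv1\pmodp$, and $n\not\equiv1\pmodp$ rules out conjugacy.

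Assume now $j=1+pc$. Working with a representative of $[\chi]$ in reduced form \eqref{intro_reduction}, $\chi=x_l\FZ_l+\sum_{m-l\le s\le m,\,p\nmid s}b_s\,p\FZ_s$, one computes $j\chi=\chi+cx_l\,p\FZ_l$: the high-part terms are killed by the extra factor $p$, and the low part is unchanged modulo $p$. Since $x_l\neq0$, the carry $cx_l$ is nonzero exactly when $p\nmid c$, i.e. when $u\neq u^n$. Hence, for the given $n$, conjugacy of $u$ and $u^n$ is equivalent to $\chi+p\FZ_l\simeq\chi$, i.e. to the term $p\FZ_l$ being negligible for strict equivalence with fixed low part. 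The reduction underlying Theorem \ref{main_thm} shows that a high-part term $p\FZ_s$ with $s<m-l$ is negligible, whereas the coefficients indexed by $[m-l,m]$ are genuine; so everything turns on the position of $l$ relative to $m-l$.

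To locate $l$ I would use the equal-characteristic identity $E_{pl}=E_l^{\,p}$ coming from $(1+t^{l})^{p}=1+t^{pl}$, which gives $\chi(E_{pl})=p\,\chi(E_l)\neq0$ because $x_l\neq0$; hence the second break satisfies $m\ge pl$ and $m-l\ge(p-1)l$. For $p\ge3$ this yields $m-l\ge2l>l$, so $l<m-l$, the term $p\FZ_l$ is negligible, $\chi\simeq j\chi$, and $u$ is conjugate to $u^n$; in particular the exceptional type cannot occur. For $p=2$ we only get $m\ge2l$: if $m>2l$ then again $l<m-l$ and $u$ is conjugate to $u^{-1}$, while if $m=2l$ then $l=m-l$ is the bottom index of the reduced range $[l,2l]$, the coefficient $b_l$ is genuine, $\chi+p\FZ_l\not\simeq\chi$, and $u$ is not conjugate to $u^{-1}$. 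Together with the necessity of $n\equiv1\pmodp$ this gives precisely the criterion "$n\equiv1\pmodp$ and $(p,l,m)\neq(2,l,2l)$". The main obstacle is the last non-equivalence $\chi+p\FZ_l\not\simeq\chi$ in the boundary case $p=2$, $m=2l$: for $l=1$ it follows from the uniqueness in Theorem \ref{exact}, but for $l\ge3$ reduced forms need not be unique, so it requires a direct argument that no admissible strict equivalence can alter the bottom coefficient $b_l$.
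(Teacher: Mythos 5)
Your overall route coincides with the paper's: transport the question through Lubin's correspondence (Theorem \ref{prior_thm_1}), identify the character of $u^n$ with the scalar multiple $n\cdot\chi$ (the $n\leftrightarrow n^{-1}$ ambiguity is indeed harmless), extract the necessity of $n\equiv 1\pmod{p}$ from the mod-$p$ data at the first break (the paper does this via Lemma \ref{evaluation}(a), i.e.\ $x_l\equiv nx_l\pmod p$, which is the same computation as your free-action argument), and for sufficiency write $n\chi=\chi+cx_l\,p\FZ_l$ in reduced form and absorb the carry term by the reduction of Proposition \ref{prop7} whenever $l<m-l$. Your derivation of $m\ge pl$ from $\chi(E_{pl})=\chi(E_l^p)=p\,x_l\neq 0$ is a nice direct substitute for the paper's appeal to Proposition \ref{bs_lemma}(b) and Remark \ref{degenerate}, and correctly yields $l<m-l$ for all $p\ge 3$ and for $p=2$, $m>2l$. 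All of this is sound.

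The genuine gap is exactly the one you flag at the end: the non-conjugacy in the boundary case $p=2$, $m=2l$, i.e.\ $\chi+2\FZ_l\not\simeq\chi$ for $\chi$ in reduced form. For $l=1$ this does follow from the uniqueness of Theorem \ref{exact}, but for odd $l\ge 3$ one has $l\ge p$, Theorem \ref{exact} is unavailable, and the paper's own remark (the type $\langle 5,15\rangle$ example with $p=2$) shows that distinct reduced forms can be strictly equivalent, so ``the coefficient $b_l$ is genuine'' is an assertion, not a proof; without it you have not established the exceptional clause $(p,l,m)\neq(2,l,2l)$, which is the only delicate part of the theorem. The paper closes this case by a short direct computation: if ${}_w\chi_1=3\chi_1$ with $w(t)\neq t$ and $\chi_1(w(t)/t)=0$, then writing $w(t)=t(1+t^k+\cdots)$ these two conditions force $0<k<l$, and one evaluates at a well-chosen basis element to reach a contradiction. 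For odd $k$,
\begin{equation*}
{}_w\chi_1(E_{2l-k})=\chi_1(E_{2l-k})+\chi_1(E_{2l})\neq \chi_1(E_{2l-k})=3\chi_1(E_{2l-k}),
\end{equation*}
using $\chi_1(E_{2l})=2\chi_1(E_l)=2x_l\neq 0$ (your own identity $E_{2l}=E_l^2$ again), while for even $k$,
\begin{equation*}
{}_w\chi_1(E_{l-k})\equiv \chi_1(E_l)\not\equiv 0\equiv 3\chi_1(E_{l-k}) \pmod{2},
\end{equation*}
since $l-k$ lies below the reduced support $\{l\}\cup[l,2l]$. Some argument of this kind (a case analysis over the leading exponent of any potential conjugating element) is what your proposal still needs to be complete.
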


The organization of the paper is as follows. In \S \ref{not} we establish the notation and conventions that will be adapted throughout the paper, and also state some prior results so that the reader may appreciate the results established in this paper better. In \S \ref{prelim_comp} we perform some preliminary computations that will be useful in \S \ref{classification}. In \S \ref{classification} we prove Theorems \ref{main_thm} and \ref{exact} and also justify the condition $l < p$ by providing a counterexample. In \S \ref{torsion_elements} we prove our main results (Theorems \ref{intro_final_thm} and \ref{conjugate}) on torsion elements of the Nottingham group $\notty$ of order $p^2$.

\section{Notation, conventions, and prior results}\label{not}

We adapt the following notation in this article. The letter $p$ always denotes a prime number, and $\F_p$ the prime field of characteristic $p >0$. By $\kappa$ we mean a characteristic $p$ finite field, $K$ is the field of Laurent-series  $\kappaseries$ over $\kappa$, and $\CalO_K := \kappapower$ its ring of integers with maximal ideal $\maxideal = t \CalO_K$ where $t$ is a fixed uniformizer of $\CalO_K$. Then the group of principal units $ 1 + \maxideal $ is denoted by $U_1$, and its higher unit  subgroups $1 + \maxideal^j$, $j \geq 1$, by $U_j$.

The Nottingham group over $\kappa$, denoted by  $\notty_\kappa$, is the subgroup of elements of $\mathrm{Aut}_\kappa(K)$ mapping $t$ to $u(t)\in t(1+\maxideal)$. An element of $\notty_\kappa$ is determined and represented by the power series $u(t)$.   
We drop the subscript $\kappa$ in the case where $\kappa = \F_p$, the prime finite field of characteristic $p > 0$. So, for example $\notty_{\F_p}$ is simply written as $\notty$. 

In this article, we deal \textit{only} with continuous characters $\chi : U_1 \to \Z/p^n \Z$, where $U_1$ equipped with induced topology from that of the topological group $K^\times$, and $\Z/p^n \Z$ with the discrete topology.  So we omit the adjective `continuous' while referring to the characters in the $\zp$-module $\Hom_{\Z_p}^{\textrm{cont}}(\npg, \Z/p^n \Z)$. 

The definitions of strict equivalence $\simeq$ (resp. weak equivalence $\sim$) on  $\Hom_{\Z_p}^{\textrm{cont}}(\npg, \Z/p^n \Z)$ and the break sequence (or the type) of a surjective character are defined in the same ways as the $\kappa=\F_p$ case (see $\mathsection1$) so that a strictly (resp. weakly) equivalent class of surjective characters have the same type.  When we say a character has certain type, the character is assumed to be a surjective character.

We now describe the main results of Lubin on torsion elements of $\notty_\kappa$.
Given an order $p^n$ element $u\in\notty_\kappa\subset\mathrm{Aut}_\kappa(K)$, the subset $F$ of  $K=\kappaseries$ fixed by $u$ is a subfield of $K$ for which $K/F$ is a degree $p^n$ totally ramified abelian extension.
Local class field theory produces a canonical continuous group homomorphism 
onto the subgroup generated by $u$:
$$\rho^K_F:F^*\to \langle u\rangle.$$
Since $K/F$ is totally ramified, the norm $N_{K/F}(t)$ of $t$ is a uniformizer of $F$. Hence, there is a unique $\kappa$-isomorphism $K\cong F$ of fields mapping $t$ to $N_{K/F}(t)$. On the other hand, there is a unique group isomorphism $\langle u\rangle\cong \Z/p^n\Z$ mapping $u$ to $1$.
Therefore, we obtain, by compositions of maps, a surjective character $\chi$ in $\Hom_{\Z_p}^{\textrm{cont}}(\npg, \Z/p^n \Z)$:

$$\chi: \npg\hookrightarrow K^*\stackrel{\cong}{\longrightarrow} F^*
\stackrel{\rho^K_F}{\longrightarrow}\langle u\rangle \stackrel{\cong}{\longrightarrow}\Z/p^n\Z.$$

\begin{theorem}\label{prior_thm_1}
\cite[Theorem 2.2]{Lu}
Let $n$ be a natural number. The above association $u\mapsto \chi$ induces a bijective correspondence between the conjugacy classes of order $p^n$ elements of the Nottingham group $\notty_\kappa$ and the strictly equivalent classes of surjective characters in $\Hom_{\Z_p}^{\textrm{cont}}(\npg, \Z/p^n \Z)$.
\end{theorem}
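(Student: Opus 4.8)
The plan is to exhibit the assignment $u \mapsto \chi$ as one half of a pair of mutually inverse maps between the two sets of equivalence classes, using Galois theory to turn an order $p^n$ element into a cyclic totally ramified extension and local class field theory to turn such an extension into a character. First I would record the basic dictionary: an order $p^n$ element $u \in \notty_\kappa$ generates a cyclic group $\langle u\rangle$ of $\kappa$-automorphisms of $K$, its fixed field $F_u = K^{\langle u\rangle}$ makes $K/F_u$ a cyclic (hence abelian) Galois extension of degree $p^n$ with $\mathrm{Gal}(K/F_u) = \langle u\rangle$, and this extension is totally ramified because $u$ fixes the residue field $\kappa$ while the tame quotient has order prime to $p$. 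Conversely, from a degree $p^n$ cyclic totally ramified abelian extension together with a chosen generator of its Galois group one recovers an order $p^n$ element of $\notty_\kappa$ after identifying the top field with $K$ via a uniformizer; I would verify that the generator lands in $\notty_\kappa$ by noting that its action on a uniformizer is trivial modulo $\maxideal$, since the tame character into $\kappa^*$ must kill the $p$-group $\langle u\rangle$.

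Next I would treat the three claims separately. For well-definedness, I would show that conjugating $u$ by $g \in \notty_\kappa$ replaces $F_u$ by $g(F_u)$ and, via the functoriality of the reciprocity map under the field isomorphism $g$ together with the compatibility $N_{K/g(F_u)} \circ g = g \circ N_{K/F_u}$, replaces $\chi_u$ by a character of the form ${}_{g}\chi_u$; the normalization of the uniformizer by the norm is exactly what forces the auxiliary condition $g(t)/t \in \ker\chi_u$, so that the two characters are not merely weakly but strictly equivalent. For surjectivity, given a surjective $\chi \in \Hom_{\Z_p}^{\textrm{cont}}(\npg, \Z/p^n\Z)$ I would extend it to a character of $K^*$ (trivially on $\kappa^*$, which is prime to $p$, and by an arbitrary choice on $t$), invoke the existence theorem of local class field theory to produce the corresponding degree $p^n$ totally ramified cyclic extension $L/K$ and a generator $\phi$ of its Galois group, write $L = \kappa(\!(s)\!)$, and transport $\phi$ to an element $u \in \notty_\kappa$ via $s \mapsto t$; tracing the reciprocity map back through this identification would show that $\chi_u$ is strictly equivalent to $\chi$, with the freedom in extending to $t$ and in choosing $\phi$ absorbed by strict equivalence. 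For injectivity, I would run the same functoriality in reverse: from a strict equivalence $\psi = {}_{g}\chi$ with $g(t)/t \in \ker\chi$ I would recover an isomorphism of the corresponding extensions commuting with the reciprocity maps, hence an element of $\notty_\kappa$ conjugating one order $p^n$ element to the other.

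The main obstacle, I expect, is not the Galois-theoretic or existence-theoretic bookkeeping but the precise matching of \emph{strict} (as opposed to merely weak) equivalence with conjugacy. Functoriality of the Artin map yields weak equivalence almost immediately; the delicate point is that the canonical identification $K \cong F_u$ sending $t$ to $N_{K/F_u}(t)$ rigidifies the choice of uniformizer, and one must check that the residual ambiguity is governed exactly by the condition $u(t)/t \in \ker\chi$ in the definition of strict equivalence. I would isolate this as a lemma computing how changing the uniformizer of $F_u$ by a unit alters $\chi_u$, and confirm that the alteration is trivial precisely when the unit lies in the appropriate subgroup, so that strict equivalence classes and conjugacy classes correspond bijectively.
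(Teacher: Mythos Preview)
The paper does not give its own proof of this statement: Theorem~\ref{prior_thm_1} is quoted as a prior result from Lubin \cite[Theorem~2.2]{Lu}, and only the construction $u\mapsto\chi$ is spelled out in the paragraph preceding it. So there is nothing in this paper to compare your proposal against.

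That said, your outline is a faithful sketch of the argument behind Lubin's theorem and is consistent with the construction the paper records in \S\ref{not}: the passage from $u$ to the fixed field $F_u$, the identification $K\cong F_u$ via $t\mapsto N_{K/F_u}(t)$, and the composition with the reciprocity map are exactly what the paper describes. Your identification of the delicate point---that the norm-normalization of the uniformizer is precisely what upgrades weak equivalence to strict equivalence---is correct and is the substantive content of Lubin's proof; the rest is, as you say, functoriality of the Artin map and the existence theorem.
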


By Theorem \ref{prior_thm_1}, the conjugacy classes of order $p^n$ elements of $\notty_\kappa$ can be further  classified by the types $\langle b^{(0)}, \ldots, b^{(n-1)} \rangle$ of their corresponding strictly equivalent classes of characters. Hence, it makes sense to talk about the type of a torsion element.

\begin{theorem}\label{prior_thm_2}
\textrm{(Klopsch, Lubin)} \footnote{The result on $d_m$ is due to Klopsch \cite{Kl} and the result  on $d_{1,m}$ is an unpublished work of Lubin.} Let $q$ be the size of the finite field $\kappa$. Let $d_m$ denote the conjugacy classes of order $p$ elements  of $\notty_\kappa$ of type $\langle m\rangle$  and   $d_{1,m}$ denote the number of conjugacy classes  of order $p^2$ elements of $\notty_\kappa$ of  type $\langle 1, m \rangle$. Then $d_m=q-1$ and

\[
d_{1,m} = 
\begin{cases}
p(q-1)	 & \; \textrm{if} \; m \equiv 0 \pmodp. \\
(q-1)^2	 & \; \textrm{if}   \; m \equiv  1 \pmodp.\\
p(q-1)^2	 & \; \textrm{otherwise}. \\
\end{cases}
\]  
\end{theorem}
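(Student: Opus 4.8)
By Theorem \ref{prior_thm_1} the two quantities are exactly the numbers of strict equivalence classes of surjective characters $\chi\in\Hom_{\Z_p}^{\mathrm{cont}}(U_1,\Z/p\Z)$ of type $\langle m\rangle$, respectively $\chi\in\Hom_{\Z_p}^{\mathrm{cont}}(U_1,\Z/p^2\Z)$ of type $\langle 1,m\rangle$, so the whole task is to enumerate these classes. The plan is to use the standard expansion together with the left action ${}_{u}\chi(f)=\chi(f\circ u)$, restricted to \emph{strict} moves $u$ (those with $u(t)/t\in\ker\chi$). Over $\kappa=\F_q$ one records the expansion in the dual basis indexed by pairs $(k,\zeta)$ with $p\nmid k$ and $\zeta$ ranging over an $\F_p$-basis of $\F_q$; equivalently each layer $U_k/U_{k+1}\cong\F_q$ (for $p\nmid k$) carries an $\F_q$-worth of data, and the induced action of $\notty_\kappa$ is upper triangular for this layer filtration.

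For the order $p$ count, a character of type $\langle m\rangle$ is trivial on $U_{m+1}$ and nontrivial on $U_m$, so its top layer is a nonzero element of $\Hom(U_m/U_{m+1},\F_p)\cong\Hom(\F_q,\F_p)$, which the trace pairing identifies with a leading coefficient $\lambda\in\F_q^\times$ ($q-1$ possibilities). I would then show that every lower layer coefficient ($i<m$, $p\nmid i$) can be cleared by a strict move: conjugating by $1+ct^{j}$ changes the layer-$i$ datum while producing corrections only in layers above $i$, so a downward induction on $i$ removes them all, and the constraint $u(t)/t\in\ker\chi$ leaves the top layer, hence $\lambda$, untouched. Since $\lambda$ is conversely a strict invariant, the classes biject with $\F_q^\times$ and $d_m=q-1$.

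For the order $p^2$ count, write $\bar\chi=\chi\bmod p$ (of break $1$) and the standard expansion $\chi=x_1\FZ_1+\sum_{1\le j\le m,\,p\nmid j}a_j\,p\FZ_j$, with $x_1\in\F_q^\times$ and $a_m\neq 0$ when $p\nmid m$ (over $\F_q$ the $x_1,a_j$ are $\F_q$-data). The same triangular clearing reduces to the resonant window $j\in\{m-1,m\}$, all lower second-layer coefficients being eliminated as above; the three cases of the theorem merely record which of $m-1,m$ are coprime to $p$ (only $m$ if $m\equiv 1$, only $m-1$ if $m\equiv 0$, both otherwise). The genuine top invariants contribute $\F_q^\times$-factors: $x_1$ always (a factor $q-1$), and $a_m\in\F_q^\times$ when $p\nmid m$ (a second factor $q-1$).

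The delicate point, and the step I expect to be the main obstacle, is the residual coefficient $a_{m-1}$ at the resonant level $j=m-l=m-1$. Clearing it requires a strict move by $1+ct$, and because the first break sits at level $l=1$ the $\Z/p^2$-carry produced by the leading term $x_1\FZ_1$ feeds a Frobenius-twisted cross term into level $m$, so the net effect on $a_{m-1}$ is a shift by an additive-polynomial expression of the form $c\mapsto \mu(c^{p}-c)$, i.e. an Artin--Schreier operator $\wp$ up to a unit $\mu$. Thus $a_{m-1}$ is well defined only modulo $\wp(\F_q)$, whose cokernel $\F_q/\wp(\F_q)\cong\F_p$ has size $p$; over the prime field $\wp\equiv 0$, leaving the full $p$ values $a_{m-1}\in\F_p$, consistently with $B(p,1,m)$. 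Assembling the factors gives $p(q-1)$ when $m\equiv 0$, $(q-1)^2$ when $m\equiv 1$, and $p(q-1)^2$ otherwise. What needs the most care is showing that this Artin--Schreier obstruction is \emph{exactly} the residual freedom, i.e. that the top data $x_1,a_m$ stay honest invariants and no further collapse of classes occurs, which amounts to tracking the $\Z/p^2$-carries against the constraint $u(t)/t\in\ker\chi$ precisely at the resonant level.
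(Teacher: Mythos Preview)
The paper does not prove this theorem. It is stated in \S\ref{not} as a prior result, attributed to Klopsch \cite{Kl} for $d_m$ and to unpublished work of Lubin for $d_{1,m}$; no argument is given. So there is nothing in the paper to compare your proposal against directly. What the paper \emph{does} prove (Theorems \ref{main_thm}, \ref{exact}, \ref{intro_final_thm}) recovers the $d_{1,m}$ formula only in the special case $\kappa=\F_p$, i.e.\ $q=p$, via the reduced-form machinery of \S\ref{classification}.

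On the merits of your sketch: the overall architecture is right and parallels the paper's $\F_p$ argument --- reduce to a window $[m-1,m]$, identify $x_1$ and (when $p\nmid m$) $a_m$ as invariants contributing $(q-1)$ factors, and analyse the residual coefficient $a_{m-1}$. The gap is the Artin--Schreier step. You assert that acting by $u(t)=t(1+ct)$ produces, through a ``$\Z/p^2$-carry'' from $x_1\FZ_1$, a Frobenius-twisted shift $c\mapsto\mu(c^p-c)$ on $a_{m-1}$, but you do not exhibit this computation, and I do not see where $c^p$ would enter: since $1+u(t)^{m-1}\in U_{m-1}$ and $\FZ_1$ vanishes on $U_2$, the term $x_1\FZ_1$ contributes nothing to ${}_u\chi(E_{m-1})$. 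What actually happens is simpler and not of Artin--Schreier type: the strictness constraint $\chi(u(t)/t)\equiv 0\pmod p$ forces $c$ into the $\F_p$-hyperplane $\ker(\bar\chi|_{U_1/U_2})\subset\F_q$, and the induced action on the level-$(m-1)$ datum is an affine shift $a_{m-1}\mapsto a_{m-1}+(m-1)c\cdot(\text{top datum at }m)$. If this shift is nondegenerate on the hyperplane, the orbits have size $q/p$ and you get the desired factor $p$; but you must check nondegeneracy and also that the move does not disturb $x_1$ or $a_m$. Separately, the lower bound --- that distinct reduced representatives are not strictly equivalent --- is not addressed beyond ``no further collapse occurs''; over $\F_p$ this is exactly the content of Theorem \ref{exact}, and over $\F_q$ it needs its own argument.
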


When $\kappa=\F_p$, the second author proved the following.

\begin{theorem}\label{prior_thm_3}
\cite{Ki}\footnote{In \cite{Ki}, the notation $d_{2,m}$ and $d_{2,m}^{\textrm{weak}}$ are respectively
denoted by $d_m$ and $d_{m}^{\textrm{weak}}$.} 
Let $d_{2,m}$ denote the number of conjugacy classes of  order $p^2$ elements of $\notty$  of type $\langle 2, m \rangle$ and $d_{2,m}^{\textrm{weak}}$ denote the number of 
weakly equivalent classes of surjective characters in $\Hom_{\Z_p}^{\textrm{cont}}(\npg, \Z/p^2 \Z)$
of type $\langle 2,m\rangle$.
Then
\[
d_{2,m}^{\textrm{weak}} \leq d_{2,m} \leq p d_{2,m}^{\textrm{weak}},
\]
where
\[
d_{2,m}^{\textrm{weak}} = 
\begin{cases}
p(p-1)	 & \; \textrm{if} \; m \equiv 0 \pmodp. \\
(p-1)^2	 & \; \textrm{if}   \; m \equiv  1 \pmodp.\\
p(p-1)^2	 & \; \textrm{otherwise}. \\
\end{cases}
\]  
\end{theorem}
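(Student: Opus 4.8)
The plan is to establish the two assertions of the theorem in turn: first the closed formula for the number $d_{2,m}^{\textrm{weak}}$ of weak equivalence classes of type $\langle 2,m\rangle$, and then the sandwich estimate $d_{2,m}^{\textrm{weak}}\le d_{2,m}\le p\,d_{2,m}^{\textrm{weak}}$ relating it to the number of strict classes, which by Theorem \ref{prior_thm_1} is exactly the number of conjugacy classes of order $p^2$, type $\langle 2,m\rangle$ elements.

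I would begin by writing a type $\langle 2,m\rangle$ character in its standard expansion $\chi=x_1\FZ_1+x_2\FZ_2+p\sum_{p\nmid j,\,j\le m}a_j\FZ_j$ with $x_2\ne0$, and with $a_m\ne0$ whenever $p\nmid m$. This presents the set of such characters as an explicit finite set, so the whole problem becomes a count of orbits of the weak action $\chi\mapsto{}_u\chi$ of $\notty$. The technical engine is the explicit transformation law of the coordinate vector $(x_1,x_2,\{a_j\})$ under a substitution $u(t)=t(1+\sum_{i\ge1}c_it^i)$, which I would read off from the preliminary computations. I expect this law to split into three effects that I would use in sequence: the leading coefficient $x_2$ is a weak invariant (as in the order $p$ computation, since $\chi$ kills everything above the top break); the subleading coefficients $a_j$ with $j<m$ can be cleared one after another by a suitable choice of the $c_i$; and a single top coordinate in the $p$-part survives. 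Normalizing in this order reduces every character to a short normal form, and counting those forms gives $d_{2,m}^{\textrm{weak}}$.

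The trichotomy in $m\bmod p$ is where I expect the genuine difficulty, and it should come from two distinct resonance phenomena in the transformation law, both controlled by whether a critical index is divisible by $p$ or sits at a distinguished offset from the first break $l=2$. The Frobenius identity $1+t^m=(1+t^{m/p})^p$, valid in $\powerseries$ when $p\mid m$, forces the top of the $p$-part to be governed by the first-break datum rather than by an independent coefficient $a_m$ (which is then absent from the expansion), collapsing the freedom to $p(p-1)$. When $m\equiv1\pmod p$ the substitution that removes $x_1$ should land in a slot that can additionally be cleared, eliminating one $\F_p$ worth of freedom and leaving $(p-1)^2$; in the remaining residues that extra clearing is obstructed and the count is $p(p-1)^2$. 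The heart of the argument, and the main obstacle, is to pin down exactly which subleading coefficient resists elimination in each residue class; I would do this by tracking the induced change of coordinates degree by degree and isolating the critical index where divisibility by $p$, or the offset $m-l$, blocks the normalization.

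For the sandwich bound I would compare the weak and strict orbits directly. Each strict class lies in a unique weak class, giving a surjection from strict classes onto weak classes, and I must bound the size of its fibres. Since strict equivalence is weak equivalence together with the extra constraint $u(t)/t\in\ker\chi$, the strict classes inside a fixed weak class are measured by the possible values of $\chi(u(t)/t)\in\Z/p^2\Z$ as $u$ realizes the weak equivalence. The lower bound $d_{2,m}^{\textrm{weak}}\le d_{2,m}$ is immediate because weak equivalence is coarser. For the upper bound I would check that the deep substitutions already realize all of $p\Z/p^2\Z$ among these values, so the reachable values form a coset structure of index at most $[\Z/p^2\Z:p\Z/p^2\Z]=p$; hence every fibre has size between $1$ and $p$, yielding $d_{2,m}\le p\,d_{2,m}^{\textrm{weak}}$. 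Invoking Theorem \ref{prior_thm_1} to translate strict classes into conjugacy classes then completes the proof.
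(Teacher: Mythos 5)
You should know at the outset that the paper contains no proof of this statement: it is imported verbatim from \cite{Ki} with a citation (it is the second author's earlier theorem), so your attempt can only be measured against the argument that the paper's own machinery reconstructs. Measured that way, your sandwich bound is essentially correct and is the standard argument: the lower bound is immediate since strict equivalence refines weak equivalence, and for the upper bound the precise mechanism is Lemma \ref{lubin_lemma_2}: strictness of a weak equivalence $u$ depends only on $\chi(u(t)/t)\bmod p\in\Z/p\Z$, and the cocycle identity $\chi\bigl(u'(t)/u(t)\bigr)=\chi\bigl(u'(t)/t\bigr)-\chi\bigl(u(t)/t\bigr)$ shows that two characters in one weak class realized by substitutions with the same value mod $p$ are strictly equivalent; hence each weak class contains at most $p$ strict classes, and Theorem \ref{prior_thm_1} translates strict classes into conjugacy classes. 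Your phrasing about ``realizing all of $p\Z/p^2\Z$'' is a slightly garbled version of this, but the idea is right and completable.

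The genuine gap is in the weak count, where the decisive computation is explicitly deferred (``the heart of the argument\dots is to pin down exactly which subleading coefficient resists elimination'') and the one concrete mechanism you do offer is wrong. First, your claim that all subleading $a_j$ with $j<m$ can be cleared weakly is false: the coefficient $b_{m-1}$ generically survives, and it is exactly the source of the factor $p$ in $p(p-1)$ and $p(p-1)^2$. Second, for $m\equiv1\pmodp$ the drop to $(p-1)^2$ has nothing to do with ``the substitution that removes $x_1$'': it happens because $p\mid m-1$, so the slot $m-1$ is simply absent from the dual-basis expansion, while $b_{m-2}$ is clearable since $p\nmid m-2$ and $p\nmid m$ (the substitution $t(1+t^2)^c$ shifts it by $(m-2)c\,b_m$). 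Third, and most serious, the residue $m\equiv2\pmodp$ defeats your announced heuristic: there $p\nmid m-1$ and $p\nmid m$, so degree-by-degree clearing would predict that the $k=1$ substitution $u=t(1+t)^c$ removes $b_{m-1}$ (shift $(m-1)c\,b_m\neq0$), giving $(p-1)^2$ instead of the stated $p(p-1)^2$. The shift is illusory: $u$ also creates the unit coefficient $c\,x_2$ at depth $1$ in the mod-$p$ reduction, and any substitution $v$ restoring the mod-$p$ normal form must have first coordinate $\equiv-c\pmodp$, whose effect on the slot $m-1$ is $-(m-1)c\,b_m$, exactly cancelling the shift; since $p\mid m-2$ there is no slot $m-2$ to trade against, $b_{m-1}$ is a weak invariant in this residue class, and the count is $p(p-1)^2$ for a different reason than in the generic residues. (Relatedly, in the case $p\mid m$ you also need the observation $\chi(E_m)=p\,\chi(E_{m/p})=p\,x_2\neq0$, from $1+t^m=(1+t^{m/p})^p$, for the clearing of $b_{m-2}$ to go through at all.) Until this cancellation analysis is actually carried out residue by residue, the formula for $d_{2,m}^{\textrm{weak}}$ is asserted, not proved.
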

In this article, our main result Corollary \ref{main_cor} improves the result of the second author above, namely  that we obtain $d_{2,m} = p d_{2,m}^{\textrm{weak}}$.

\section{Preliminary computations}\label{prelim_comp}
From now on till the end of the paper, we assume $K=\laurentseries$ and work on the Nottingham group $\notty$ over $\F_p$.
Consider the $\Z/p^2 \Z$-module $\Hom_{\Z_p}^{\textrm{cont}}(U_1, \Z/p^2 \Z)$ of characters.
The set
$$\{E_j:=1+t^j\in U_1: \hspace{.1in} j\in\N\}$$
contains the subset  
$$\{E_j:=1+t^j\in U_1: \hspace{.1in} j\in\N, \hspace{.1in} p\nmid j\},$$
which is a topological $\Z_p$-basis
of $U_1$ and the dual basis 
is denoted by
$$\{\FZ_j\in \Hom_{\Z_p}^{\textrm{cont}}(U_1, \Z/p^2 \Z):\hspace{.1in} j\in\N,\hspace{.1in} p\nmid j \},$$ 
i.e., $\FZ_j( E_i) = \delta_{ij}$, the Kronecker delta function. 
Let $\CalC = \{ 0, 1 \ldots, p^2 -1 \}$ be a fixed choice of representatives of $\Z/p^2 \Z$. Then any surjective character $\chi\in \Hom_{\Z_p}^{\textrm{cont}}(U_1, \Z/p^2 \Z)$ with break sequence $\langle l,m \rangle$ has the expression of the form
\begin{equation}\label{basic}
\chi = \sum_{ \substack{1 \leq j \leq m, p \nmid j}} c_j \FZ_j, 
\end{equation}
where the coefficients $c_j \in \CalC$.


\begin{proposition}
\label{bs_lemma}
(Lubin \cite{Lu}) Let $\chi : \npg \to \Z/p^n \Z$ be a surjective continuous character. Let  $\langle b^{(0)}, \ldots, b^{(n-1)} \rangle$ be its break sequence. Then the following conditions hold:
\smallskip
\begin{enumerate}[(a)]
    \item\label{pri_cond} $\gcd(p,b^{(0)}) =1$;
    \smallskip
    \item\label{ine_cond} for each $i >0$, $b^{(i)} \geq p b^{(i-1)}$, and
    \smallskip
    \item\label{str_cond} if the above inequality is strict, then $\gcd(p,b^{(i)}) =1$.
\end{enumerate}
\smallskip
Conversely, every sequence $\langle b^{(0)}, \ldots, b^{(n-1)} \rangle$ satisfying the above three conditions is the break sequence of some character $\chi$ on $\npg$. There are only finitely many different characters $\chi$ with the break sequence $\langle b^{(0)}, \ldots, b^{(n-1)} \rangle$, and a \textrm{fortiori}, only finitely many strict equivalence classes of such characters.
\end{proposition}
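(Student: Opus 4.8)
The plan is to reduce everything to two structural facts about the unit filtration $U_b = 1 + \maxideal^b$. First, each graded piece $U_b/U_{b+1} \cong \kappa^+$ is an elementary abelian $p$-group, so any character into $\Z/p^n\Z$ carries it into the order-$p$ subgroup $p^{n-1}\Z/p^n\Z$. Second, since $\mathrm{char}\,K = p$ we have $(1+x)^p = 1 + x^p$, so the $p$-th power map sends $U_b$ into $U_{pb}$ and induces the Frobenius $a \mapsto a^p$ on $U_b/U_{b+1} \to U_{pb}/U_{pb+1}$, which is bijective because $\kappa$ is perfect. I would also use repeatedly that the $i$-th break of $\chi$ depends only on $\chi \bmod p^{i+1}$, so that $b^{(0)}, \dots, b^{(i)}$ are exactly the breaks of the reduction $\chi \bmod p^{i+1}$; in particular $\chi$ is trivial on $U_{M+1}$ for the top break $M := b^{(n-1)}$.

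For the forward conditions I would argue directly with the $p$-th power map. Condition \eqref{pri_cond} is the base case: assuming $p \mid b^{(0)}$, every $x \in U_{b^{(0)}}$ is, modulo $U_{b^{(0)}+1}$, a $p$-th power $z^p$ with $z \in U_{b^{(0)}/p}$, whence $\bar\chi(x) = p\,\bar\chi(z) = 0$ for $\bar\chi = \chi \bmod p$; this forces $\bar\chi(U_{b^{(0)}}) = 0$, contradicting maximality. For \eqref{ine_cond}, let $c = b^{(i-1)}$ and pick $y \in U_c \setminus U_{c+1}$ with $(\chi \bmod p^i)(y) \neq 0$; by the first structural fact this value is $\alpha p^{i-1}$ with $p \nmid \alpha$, so $\chi(y^p) = p\,\chi(y) \equiv \alpha p^{i}$ is nonzero modulo $p^{i+1}$, and since $y^p \in U_{pc}\setminus U_{pc+1}$ we obtain $b^{(i)} \geq pc$. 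For \eqref{str_cond}, assume $b^{(i)} =: M > pc$ and $p \mid M$; pulling back a $p$-th root $z \in U_{M/p}$ of a generator $x \in U_M$ gives $\chi(x) = p\,\chi(z)$, and unwinding valuations shows $z$ is detected by $\chi \bmod p^i$, so $M/p \leq c$, i.e. $M \leq pc$, a contradiction. The point to flag is the asymmetry between \eqref{pri_cond}/\eqref{str_cond} and the equality case of \eqref{ine_cond}: the pull-back needs the $p$-th root to lie where the character is still visible, which is exactly why a break equal to $p\,b^{(i-1)}$ (necessarily divisible by $p$) is permitted while a strictly larger one cannot be.

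For the converse I would induct on $n$. When $n=1$, the reduction $\FZ_{b^{(0)}} \bmod p$ works: its value on $E_{b^{(0)}}$ is $1$, and the basis expansion shows any element of $U_{b^{(0)}+1}$ has its $E_{b^{(0)}}$-exponent divisible by $p$ (using $p \nmid b^{(0)}$), so the top break is exactly $b^{(0)}$. For the step, lift a given $\chi_{n-1}$ with breaks $b^{(0)}, \dots, b^{(n-2)}$ to some $\tilde\chi \colon U_1 \to \Z/p^n\Z$ — possible because $U_1$ is a free $\Z_p$-module, so reduction of character groups is surjective — and twist by $p^{n-1}\eta$ with $\eta \colon U_1 \to \F_p$ to correct the top break without disturbing the lower ones. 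If $b^{(n-1)} > p\,b^{(n-2)}$, then $p \nmid b^{(n-1)}$ and I install a nonzero value at level $b^{(n-1)}$ through $\eta(E_{b^{(n-1)}})$ while using higher prime-to-$p$ coordinates of $\eta$ to cancel the higher contributions of $\tilde\chi$. If instead $b^{(n-1)} = p\,b^{(n-2)}$, the computation in \eqref{ine_cond} shows $\tilde\chi$ already has a forced nonzero value at the $p$-divisible level $p\,b^{(n-2)}$ coming from the $p$-th power of the level-$b^{(n-2)}$ generator, and since an $\F_p$-twist only affects prime-to-$p$ levels it cannot cancel this; so $\eta$ only needs to kill the prime-to-$p$ part of $\tilde\chi$ above $p\,b^{(n-2)}$, leaving the top break exactly $p\,b^{(n-2)}$.

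Finally, finiteness is immediate from the reduction observation: any $\chi$ with top break $M$ is trivial on the open subgroup $U_{M+1}$, hence factors through the finite group $U_1/U_{M+1}$, which admits only finitely many characters into $\Z/p^n\Z$; a fortiori there are finitely many strict equivalence classes. I expect the genuine obstacle to be the converse, and specifically the equality case $b^{(i)} = p\,b^{(i-1)}$: one must verify carefully that the $\F_p$-twist has no component at $p$-divisible levels, so that the break produced ``for free'' by the $p$-th power map from the previous layer survives and lands exactly at $p\,b^{(i-1)}$ rather than being accidentally cancelled or pushed higher.
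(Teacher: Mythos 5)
The paper itself supplies no argument for this proposition: it is imported verbatim from Lubin \cite{Lu} (hence the attribution), so there is no in-paper proof to match your attempt against. Judged on its own, your proof is essentially correct and self-contained, and it runs along the standard lines: in characteristic $p$ the identity $(1+x)^p=1+x^p$ makes the $p$-th power map $U_b\to U_{pb}$ a bijection on graded pieces, and all three forward conditions follow by pushing generators forward ($b^{(i)}\geq pb^{(i-1)}$) or pulling $p$-th roots back ((a) and (c)); your finiteness argument via triviality on $U_{b^{(n-1)}+1}$ is also the right one. Two repairs are needed. First, in your proof of (b) the parenthetical claim that $(\chi\bmod p^i)(y)$ equals $\alpha p^{i-1}$ with $p\nmid\alpha$ does not follow from your ``first structural fact'': $\chi|_{U_c}$ does not factor through $U_c/U_{c+1}$, so the graded-piece observation gives no control on the valuation of $\chi(y)$. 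Fortunately the claim is also unnecessary: $\chi(y)\notin p^{i}\Z/p^n\Z$ means $v_p(\chi(y))\leq i-1\leq n-2$, whence $p\chi(y)$ is nonzero of valuation at most $i$, which is all the argument uses. Second, your basis computations presume $\kappa=\F_p$ (the paper's standing assumption from \S\ref{prelim_comp}, so this is legitimate here), but note that Lubin's statement holds over any finite $\kappa$; for that one replaces $\{E_j\}$ by a basis $\{1+\lambda t^j\}$ with $\lambda$ ranging over an $\F_p$-basis of $\kappa$.

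For the converse, the obstacle you flag at the end dissolves if you make the bookkeeping explicit. Writing $\chi=\sum_{p\nmid j}c_j\FZ_j$ with $c_j=\chi(E_j)\in\Z/p^n\Z$, one checks by the filtration argument you use in the base case that
\begin{equation*}
b^{(i)}=\max\{\,jp^{\,i-v_p(c_j)}\;:\;p\nmid j,\ v_p(c_j)\leq i\,\},
\end{equation*}
since $E_j^{p^k}\in U_{jp^k}$ and every element of $U_m$ has basis exponents $c_j$ with $jp^{v_p(c_j)}\geq m$. With this formula the inductive step becomes transparent: lifting each $c'_j\in\Z/p^{n-1}\Z$ minimally preserves $v_p$, so the minimal lift $\tilde\chi$ has top break exactly $p\,b^{(n-2)}$; the twist by $p^{n-1}\eta$ alters only those coordinates with $v_p(c_j)=n-1$, whose contribution to the top break sits at prime-to-$p$ levels $j$, while coordinates with $v_p(c_j)\leq n-2$ are untouched and contribute exactly $p\,b^{(n-2)}$. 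Hence in the case $b^{(n-1)}>p\,b^{(n-2)}$ (so $p\nmid b^{(n-1)}$ by hypothesis) you kill the finitely many offending coordinates above $b^{(n-1)}$ and install $c_{b^{(n-1)}}=p^{n-1}$, and in the equality case you kill everything above $p\,b^{(n-2)}$; no accidental cancellation at the $p$-divisible level is possible, exactly as you suspected. With these points tightened, your write-up constitutes a complete proof of the proposition that the paper leaves to the literature.
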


Let $\chi$ be the character in \eqref{basic}. It follows 
from Proposition \ref{bs_lemma}
that
\smallskip
\begin{enumerate}
\label{coe_pro}
\item
$l$ is relative prime to $p$;
\smallskip
\item
$c_l$ is relative prime to $p$;
\smallskip
\item 
for all $l+1 \leq j \leq m$ such that $p \nmid j$, the coefficient  $c_j \in \Set{0,p,2p, \ldots, (p-1)p}$;
\smallskip
\item
if $m$ is relative prime to $p$, then $c_m \neq 0$.
\end{enumerate}
\smallskip
\noindent Writing $c_1,\ldots, c_l$ in the form $x + p . a$ where $x,a \in \{ 0, 1 \ldots, p-1 \}$, and changing the notation, the above expansion takes the following form
\begin{equation}\label{sta_expansion}
\chi = \sum_{1 \leq i \leq l, p \nmid i} x_i \FZ_i  + \sum_{\substack{1 \leq j \leq m,  p \nmid j}} a_j. p \FZ_j,
\end{equation}
where now $x_1, x_2, \ldots x_l, a_1, \ldots, a_m   \in \{ 0, 1, \ldots p-1 \}$, and $x_l \neq 0$, and if $(m,p) =1 $ then $a_m \neq 0$ too. 
We call \eqref{sta_expansion} the \textit{standard expansion} of $\chi$.

Let $\psi$ be a character with break sequence $\langle l \rangle$. Then $\psi$ is trivial on $1 + \maxideal^{l+1}$ but not on $1 + \maxideal^{l}$, and that $\psi$ restricts to a nonzero linear functional $\wt{\psi}$ on the one dimensional $\F_p$-vector space $(1 + \maxideal^l)/ (1 + \maxideal^{l+1})$.

\begin{theorem}\label{lubin_lemma}
(Lubin) Let $\chi,\psi: \npg \to \Z/p\Z$ be characters both of type $\langle l \rangle$. Then $\chi \simeq \psi$ if and only $\wt{\chi} = \wt{\psi}$.
\end{theorem}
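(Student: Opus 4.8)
The plan is to rephrase the invariant and then treat the two implications separately. Since $\chi$ has type $\langle l\rangle$ it is trivial on $U_{l+1}=1+\maxideal^{l+1}$, and under the identification $(1+\maxideal^l)/(1+\maxideal^{l+1})\cong\F_p$, $1+at^l\mapsto a$, the functional $\wt{\chi}$ is just multiplication by $\chi(E_l)$. Hence $\wt{\chi}=\wt{\psi}$ amounts to the single scalar equality $\chi(E_l)=\psi(E_l)$, and the theorem reduces to: $\chi\simeq\psi$ if and only if $\chi(E_l)=\psi(E_l)$. The forward implication is immediate and does not even use the strict-equivalence condition: if $\psi=\ub{u}\chi$ with $u\in\notty$, write $u(t)=t(1+z)$, $z\in\maxideal$; then $u(t)^l=t^l(1+z)^l\equiv t^l\pmod{\maxideal^{l+1}}$, so $1+u(t)^l$ and $E_l$ differ by a factor in $U_{l+1}$, on which $\chi$ vanishes, and therefore $\psi(E_l)=\chi(1+u(t)^l)=\chi(E_l)$.

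For the reverse implication I would establish the normal form that every type-$\langle l\rangle$ character is strictly equivalent to $\chi(E_l)\FZ_l$; granting this, when $\chi(E_l)=\psi(E_l)$ the symmetry and transitivity of $\simeq$ give $\chi\simeq\chi(E_l)\FZ_l=\psi(E_l)\FZ_l\simeq\psi$. To reach the normal form, eliminate the coefficients $x_j$ of the standard expansion $\chi=\sum_{j\le l,\ p\nmid j}x_j\FZ_j$ with $j<l$ one at a time, from the largest such $j$ downward. The mechanism is the elementary expansion, for $u(t)=t(1+ct^{s}+\cdots)$,
\begin{equation*}
1+u(t)^j=1+t^j+jc\,t^{\,j+s}+(\text{terms of order }>j+s),
\end{equation*}
which after rewriting in the basis $\{E_i\}$ gives $\ub{u}\chi(E_j)=x_j+jc\,x_{j+s}+(\text{contributions from levels }>j+s)$. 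Choosing $s=l-j$ places the leading correction $jc\,x_l$ precisely at level $l$ (which is prime to $p$, so $E_l$ is a basis vector), makes all further contributions sit above $l$ where $\chi$ vanishes, and leaves every level $>j$ unchanged; since $jx_l\not\equiv0\pmod p$ one solves $x_j+jc\,x_l=0$ for $c$ and kills $x_j$ without reviving a higher coefficient. After finitely many steps only $\chi(E_l)$ remains.

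The step that needs genuine care—and what I expect to be the main obstacle—is keeping the strict-equivalence constraint $u(t)/t\in\ker\chi$ throughout, for $u(t)/t=1+ct^{l-j}+\cdots$ has no a priori reason to lie in $\ker\chi$. The resolution uses that terms of $u$ of order $>l-j$ do not affect the clearing at level $j$, since they move $1+u(t)^j$ into levels $>l$ annihilated by $\chi$; in particular the $t^l$-coefficient $\gamma$ of $u(t)/t$ is free and changes $\chi(u(t)/t)$ by $\gamma\,\chi(E_l)=\gamma x_l$. As $x_l$ generates $\Z/p\Z$, a suitable $\gamma$ forces $u(t)/t\in\ker\chi$ while leaving the elimination intact. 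Making all of this rigorous—verifying the claimed leading behaviour of $1+u(t)^j$ in the basis $\{E_i\}$ and checking that the clearing and the kernel adjustment do not interfere—is the one computational point; everything else is bookkeeping with the equivalence relation $\simeq$.

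Alternatively, once the forward implication is known the reverse one follows by a counting shortcut: by Theorems \ref{prior_thm_1} and \ref{prior_thm_2} there are exactly $p-1$ strict-equivalence classes of type $\langle l\rangle$, while $[\chi]\mapsto\wt{\chi}$ is a well-defined surjection (surjectivity being witnessed by the characters $c\FZ_l$, $c\in\F_p^\times$) onto the $p-1$ nonzero functionals; a surjection between finite sets of equal size is injective, which is exactly the reverse implication.
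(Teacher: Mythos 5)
Your proposal is correct, but note that the paper does not actually prove this statement at all: its ``proof'' is a pointer to Lubin's original article (\cite[Theorem 4.2]{Lu}), so any self-contained argument is necessarily a different route. Your main argument --- reducing every type $\langle l\rangle$ character to the normal form $\chi(E_l)\FZ_l$ by clearing the coefficients $x_j$, $j<l$, $p\nmid j$, downward with $u(t)=t(1+ct^{l-j})(1+\gamma t^l)$ --- checks out: the corrections to levels $i$ with $j<i\le l$ land at $i+l-j>l$ where $\chi$ vanishes, the equation $x_j+jc\,x_l=0$ is solvable precisely because $p\nmid j$ and $x_l\neq 0$, and your $\gamma$-adjustment legitimately restores $u(t)/t\in\ker\chi$ without disturbing levels $\ge j$. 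This is in effect the order-$p$ prototype of the very technique the paper deploys later for order $p^2$ (Proposition \ref{prop7} and the proof of Theorem \ref{exact}), so your proof fits the paper's toolkit well. Two remarks: first, by Lemma \ref{lubin_lemma_2} with $n=1$ the condition $\chi(u(t)/t)\equiv 0 \pmod{p^{0}}$ is vacuous, i.e.\ strict and weak equivalence coincide for $\Z/p\Z$-valued characters, so the kernel bookkeeping you flagged as the ``main obstacle'' can be bypassed entirely --- though handling it directly as you do is harmless and keeps the argument self-contained. Second, your counting shortcut is logically sound within the paper (Klopsch's count $d_m=q-1$ and Lubin's correspondence in Theorem \ref{prior_thm_1} are both prior, independent inputs), but it trades an elementary computation for the much deeper theorem of Klopsch, so the elimination argument is the preferable proof. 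One small caveat: your identification of $\wt{\chi}$ with multiplication by $\chi(E_l)$ uses $\kappa=\F_p$; Lubin's theorem holds over general finite $\kappa$ (where $\wt{\chi}$ is a functional on a one-dimensional $\kappa$-vector space, not determined by a single scalar in the same way), but since the paper works over $\F_p$ from \S 3 onward, this restriction is consistent with how the theorem is actually used.
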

\begin{proof}
For a proof see \cite[Theorem 4.2]{Lu}
\end{proof}

\begin{lemma}\label{lubin_lemma_2} \footnote{The second author thanks Jonathan Lubin for sharing the proof with him.}
Let $\chi, \psi : \npg \to \zmodpnz$ be two  characters. Then $\chi$ is strictly equivalent to $\psi$ if and only if there exists $u \in \notty$ such that $\psi = {}_u \chi$ and $\chi( u(t)/t ) \equiv 0 \pmod{p^{n-1}}$.
\end{lemma}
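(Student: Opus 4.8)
The forward implication is immediate from the definitions: if $\chi \simeq \psi$ then there is $u \in \notty$ with $\psi = \ub{u}\chi$ and $u(t)/t \in \ker\chi$, so $\chi(u(t)/t) = 0 \equiv 0 \pmod{p^{n-1}}$. All of the content lies in the converse, where one is handed a $u$ realizing the weak equivalence $\psi = \ub{u}\chi$ whose defect $\chi(u(t)/t)$ merely lies in $p^{n-1}\zmodpnz$, and must upgrade it to a genuine strict equivalence, i.e. produce $v \in \notty$ with $\psi = \ub{v}\chi$ and $\chi(v(t)/t) = 0$.

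The plan is to correct $u$ by post-composing with a suitable stabilizer element. Put $\stab(\chi) = \{\sigma \in \notty : \ub{\sigma}\chi = \chi\}$. Since the action satisfies $\ub{u_1}(\ub{u_2}\chi) = \ub{u_1 \circ u_2}\chi$, for any $\sigma \in \stab(\chi)$ the element $v := u \circ \sigma$ again satisfies $\ub{v}\chi = \ub{u}(\ub{\sigma}\chi) = \ub{u}\chi = \psi$. I would then record the cocycle identity
\[
\chi\bigl( v(t)/t \bigr) = \chi\bigl( u(t)/t \bigr) + \chi\bigl( \sigma(t)/t \bigr), \qquad \sigma \in \stab(\chi),
\]
which falls out of writing $u(s) = s\,\phi(s)$ with $\phi(s) := u(s)/s \in U_1$, so that $v(t)/t = (\sigma(t)/t)\cdot\phi(\sigma(t))$, together with the observation $\chi(\phi(\sigma(t))) = \ub{\sigma}\chi(\phi) = \chi(\phi)$, valid because $\sigma$ stabilizes $\chi$. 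This reduces the whole problem to a single task: find $\sigma \in \stab(\chi)$ whose defect $\chi(\sigma(t)/t)$ equals the prescribed value $-\chi(u(t)/t) \in p^{n-1}\zmodpnz$.

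Carrying out this construction is the step I expect to be the main obstacle. Assuming $\chi \neq 0$ (the trivial character being handled directly), let $m$ be the largest integer with $\chi(U_m) \neq 0$, so that $U_{m+1} \subseteq \ker\chi$. Two facts then combine. First, every $\sigma$ with $\sigma(t)/t \in U_m$ lies in $\stab(\chi)$: since $\sigma(t) = t + O(t^{m+1})$, a short order-of-vanishing computation gives $f(\sigma(t))/f(t) \in U_{m+1} \subseteq \ker\chi$ for all $f \in U_1$, whence $\ub{\sigma}\chi(f) = \chi(f(\sigma(t))) = \chi(f)$. Second, because $U_{m+1} \subseteq \ker\chi$, the restriction $\chi|_{U_m}$ factors through $U_m/U_{m+1} \cong \F_p$ as a nonzero homomorphism into $\zmodpnz$, so its image is exactly the order-$p$ subgroup $p^{n-1}\zmodpnz$. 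Hence there is $g \in U_m$ with $\chi(g) = -\chi(u(t)/t)$; setting $\sigma(t) := t\,g(t) \in \notty$ yields $\sigma \in \stab(\chi)$ and, by the cocycle identity, $\chi(v(t)/t) = \chi(u(t)/t) + \chi(g) = 0$. Thus $v(t)/t \in \ker\chi$ and $\psi = \ub{v}\chi$, exhibiting $\chi \simeq \psi$ and completing the argument.
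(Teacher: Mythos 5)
Your proof is correct and complete. One caveat on the comparison: the paper does not actually prove this lemma in-house --- its ``proof'' is the citation \cite[Lemma 4.2]{Ki} (with the argument there credited to Lubin) --- so there is no in-paper argument to measure you against; your write-up supplies a self-contained proof of exactly the kind the citation hides. The structure you chose is the natural one, and every load-bearing step checks out: (i) the composition law $\ub{u\circ\sigma}\chi=\ub{u}(\ub{\sigma}\chi)$ holds for the action $\ub{u}\chi(f)=\chi(f\circ u)$, and your cocycle identity
\[
\chi\bigl(v(t)/t\bigr)=\chi\bigl(u(t)/t\bigr)+\chi\bigl(\sigma(t)/t\bigr),\qquad v=u\circ\sigma,\ \sigma\in\stab(\chi),
\]
is valid because $\chi(\phi\circ\sigma)=\ub{\sigma}\chi(\phi)=\chi(\phi)$ for $\phi(s)=u(s)/s\in U_1$; (ii) if $\sigma(t)/t\in U_m$ then $\sigma(t)\equiv t \pmod{\FM^{m+1}}$, whence $f(\sigma(t))/f(t)\in U_{m+1}\subseteq\ker\chi$ for every $f\in U_1$ and so $\sigma\in\stab(\chi)$; (iii) since $U_m/U_{m+1}\cong\F_p$ and $\chi|_{U_m}$ is nonzero and trivial on $U_{m+1}$, its image is the unique order-$p$ subgroup $p^{n-1}\zmodpnz$, which is precisely where the hypothesis places the defect $-\chi(u(t)/t)$, so the required $g\in U_m$ exists. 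Two further points in your favor: you correctly avoid assuming $\chi$ surjective (the lemma is stated for arbitrary characters, and your argument only needs $\chi\neq 0$, with the trivial character dispatched separately), and when $\chi$ is surjective your $m$ is exactly the last break $b^{(n-1)}$, which makes transparent why the threshold in the statement is $p^{n-1}$ and not any other power.
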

\begin{proof}
For a proof see \cite[Lemma 4.2]{Ki}.
\end{proof}

\begin{remark}\label{degenerate}
A variant of the following lemma can be found in \cite[Lemma 5.1]{Ki}. As a preliminary, we note that if $m \equiv 0 \pmodp$, then $m = l p$ by Proposition \ref{bs_lemma} \eqref{str_cond}.
\end{remark}

\begin{lemma}\label{evaluation}
Let $\chi, \psi$ be two characters of type $\langle l, m \rangle$ with standard expansions as in equation \eqref{sta_expansion}. Let $u(t) = t(1 + \alpha t + \beta t^2 \cdots) \in \notty$. The
following assertions hold.
\smallskip
\begin{enumerate}[(a)]
\item\label{12}
$\ub{u} \chi(E_l) \equiv x_l \equiv \chi(E_l)$  $\pmodp$.
\smallskip
\item\label{m}
If $p \nmid m$ then $\ub{u} \chi(E_m) = p\cdot a_m $.
\end{enumerate}
\end{lemma}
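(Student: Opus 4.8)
The plan is to reduce both assertions to evaluating $\chi$ on an explicit factorization of $1+u(t)^k$ for $k\in\{l,m\}$, combining the definition of the action $\ub{u}\chi(f)=\chi(f\circ u)$ with the break-sequence characterization of the type. Writing $u(t)=t\,v(t)$ with $v(t)=1+\alpha t+\beta t^2+\cdots\in 1+\maxideal$, we have $E_k\circ u=1+u(t)^k=1+t^k v(t)^k$. Since $v(t)^k\in 1+\maxideal$, say $v(t)^k=1+w(t)$ with $w(t)\in\maxideal$, a short manipulation gives
\[
1+u(t)^k=(1+t^k)+t^k w(t)=E_k\bigl(1+E_k^{-1}t^k w(t)\bigr),
\]
and $g_k:=1+E_k^{-1}t^k w(t)$ lies in $1+\maxideal^{k+1}$, because $E_k^{-1}\in\CalO_K^\times$ and $t^k w(t)\in\maxideal^{k+1}$. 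As $\chi$ is a group homomorphism from the multiplicative group $U_1$ to the additive group $\Z/p^2\Z$, this factorization yields
\[
\ub{u}\chi(E_k)=\chi(E_k)+\chi(g_k),\qquad g_k\in 1+\maxideal^{k+1}.
\]

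For part \eqref{12} I would take $k=l$. From the standard expansion \eqref{sta_expansion} and $\FZ_i(E_l)=\delta_{il}$ we read off $\chi(E_l)=x_l+p\,a_l$, so $\chi(E_l)\equiv x_l\pmod p$, which gives the second congruence. For the first, since $b^{(0)}=l$ the definition of the break sequence forces $\chi(1+\maxideal^{l+1})\subseteq p\Z/p^2\Z$; as $g_l\in 1+\maxideal^{l+1}$ we get $\chi(g_l)\equiv 0\pmod p$, whence $\ub{u}\chi(E_l)=x_l+p\,a_l+\chi(g_l)\equiv x_l\pmod p$.

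For part \eqref{m} I would take $k=m$ and use that $m\geq pl>l$ by Proposition \ref{bs_lemma}\eqref{ine_cond}, so that no $x$-term of \eqref{sta_expansion} contributes to $\chi(E_m)$ and $\chi(E_m)=p\,a_m$. Since $b^{(1)}=m$, the break-sequence definition now gives $\chi(1+\maxideal^{m+1})\subseteq p^2\Z/p^2\Z=\{0\}$, so $\chi(g_m)=0$ and $\ub{u}\chi(E_m)=p\,a_m$ exactly.

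The computations are short, and the only point requiring care is the factorization $1+u(t)^k=E_k\,g_k$ with $g_k$ landing in the correct higher unit group $1+\maxideal^{k+1}$; this holds uniformly, including the edge case $l=1$, since the leading term of $u(t)^k$ is $t^k$, matching that of $E_k$. The remaining step is simply to match each truncation level $1+\maxideal^{k+1}$ to the appropriate layer $p^j\Z/p^2\Z$ of the filtration defining the type, everything else following from $\chi$ being a homomorphism and from reading coefficients off the standard expansion.
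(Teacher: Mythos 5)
Your proposal is correct and takes essentially the same approach as the paper: both factor $1+u(t)^k$ as $E_k$ times a unit in $1+\maxideal^{k+1}$ and then invoke the break-sequence filtration, namely $\chi(1+\maxideal^{l+1})\subseteq p\Z/p^2\Z$ for part (a) and $\chi(1+\maxideal^{m+1})=\{0\}$ for part (b). Your write-up merely makes explicit the factorization $1+u(t)^k=E_k g_k$ and the vanishing on $U_{m+1}$ that the paper's terser computation leaves implicit.
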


\begin{proof}

\begin{enumerate}[(a)]
\item
Since $E_l \circ u(t) = 1 + u(t)^l = 1 + t^l(1 + \alpha t + \beta t^2 + \cdots)^l = 1 + t^l +l \alpha t^2 + \cdots  = (1 + t^l) \cdots$ we have, modulo $p$,
\begin{align*}
\ub{u} \chi (E_l)
&= \chi(E_l \circ u(t)) \equiv \chi((1+t)^l) \\
&=  \chi(E_l) \equiv x_l. 
\end{align*}

\item
 If $p \nmid m$ then we have
\begin{align*}
{}_{u} \chi (E_m )  =  \chi( E_m \circ u(t)) =  \chi (1 +  t^m) = \chi(E_m) = p\cdot a_m  .
\end{align*}
\end{enumerate}
\end{proof}

\section{Classification of strict equivalence classes} \label{classification}
\begin{lemma}\label{lem7}
Let $\chi$ be a character of type $\langle l , m \rangle$ with the standard expansion \eqref{sta_expansion}. Then $\chi$ is strictly equivalent to the character with standard expansion 
\begin{equation}\label{row2}
 x_l \FZ_l  + \sum_{\substack{1 \leq j \leq m, p \nmid j}} b_j. p \FZ_j,  
\end{equation}
where  $b_j \in \{0,1,...,p-1\}$. In addition, if $p \nmid m$ then $b_m = a_m$.
\end{lemma}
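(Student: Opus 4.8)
The plan is to reach the reduced form by repeatedly applying strict equivalences, each of which kills one ``low'' coefficient $x_i$ (with $i<l$ and $p\nmid i$) while using the fact that the top coefficient $x_l$ is a unit modulo $p$ as a pivot. By the standard expansion, the only feature distinguishing \eqref{sta_expansion} from the target \eqref{row2} is the presence of the terms $x_i\FZ_i$ with $i<l$; the $p$-divisible coefficients $a_j\cdot p$ are permitted to change freely and simply become the $b_j$. Hence it suffices to produce $u\in\notty$ with $\chi(u(t)/t)\equiv 0\pmodp$ such that $\ub{u}\chi(E_i)\equiv 0\pmodp$ for every $i<l$ with $p\nmid i$; the preservation of $x_l$ modulo $p$, and of $a_m$ when $p\nmid m$, is then automatic from Lemma \ref{evaluation}.

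First I would record the transformation formula for the elementary moves $u(t)=t(1+c\,t^{l-i}+c'\,t^l)$. A direct expansion gives $u(t)^i=t^i+ic\,t^l+(\text{degree}>l)$, so $E_i\circ u=1+t^i+ic\,t^l+(\text{degree}>l)$; comparing with $E_iE_l^{\,ic}$, which agrees with it through degree $l$, and using that $\chi$ takes values in $p\Z/p^2\Z$ on $U_{l+1}$, one obtains $\ub{u}\chi(E_i)\equiv x_i+ic\,x_l\pmodp$. The same computation for a general index $k$ shows that every term of $E_k\circ u$ beyond $t^k$ has degree at least $k+l-i$, so it contributes in degree $\le l$ only when $k\le i$; in particular $\ub{u}\chi(E_k)\equiv x_k\pmodp$ for every $k>i$. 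Moreover the $t^l$-coefficient $c'$ only contributes in degrees $>l$, hence affects no $x_k$ modulo $p$.

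With this formula I would argue by downward induction on $i_0:=\max\{\,i<l:\ p\nmid i,\ x_i\neq 0\,\}$, the statement being vacuous when this set is empty. To clear $x_{i_0}$, choose $c\equiv -x_{i_0}(i_0x_l)^{-1}\pmodp$, which forces $\ub{u}\chi(E_{i_0})\equiv 0$ and leaves every $x_k$ with $k>i_0$ untouched; only coefficients at indices $<i_0$ may be disturbed, so the maximal surviving low index strictly drops and the induction proceeds. The point that makes this a \emph{strict} rather than merely weak equivalence is that the parameter $c'$ remains free: since $\chi(u(t)/t)\pmodp$ depends on $c'$ through the $E_l$-coordinate of $u(t)/t$ with the nonzero multiplier $x_l$, I can solve $\chi(u(t)/t)\equiv 0\pmodp$ for $c'$ after fixing $c$, and this choice is invisible to all the $x_k$ modulo $p$. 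Each step is then a strict equivalence by Lemma \ref{lubin_lemma_2}, and transitivity of $\simeq$ assembles them; finally $\ub{u}\chi(E_l)\equiv x_l\pmodp$ and, when $p\nmid m$, $\ub{u}\chi(E_m)=p\,a_m$ by Lemma \ref{evaluation}, so the endpoint has standard expansion exactly \eqref{row2} with $b_m=a_m$.

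The main obstacle is precisely the coupling between the elimination and the strict-equivalence constraint: the elementary move that clears $x_{i_0}$ changes the value $\chi(u(t)/t)$ and could push it out of $p\Z/p^2\Z$. The device that resolves this is the single extra parameter $c'$ (the coefficient of $t^l$), which is invisible to every $x_k$ modulo $p$ yet moves $\chi(u(t)/t)$ through the unit $x_l$, decoupling the two requirements. I would also take care with the bookkeeping in the transformation formula for indices $j$ with $p\mid j$, where $E_j$ is a Frobenius power $E_{j/p^a}^{\,p^a}$ and so contributes a factor of $p$; all such contributions either vanish modulo $p$ or are absorbed harmlessly into the $b_j$.
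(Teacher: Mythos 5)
Your proposal is correct, but it takes a genuinely more constructive route than the paper at the key step. The paper disposes of the mod-$p$ reduction in one stroke: since $\chi \bmod p$ has type $\langle l\rangle$ and induces the same functional on $(1+\maxideal^l)/(1+\maxideal^{l+1})$ as $x_l\FZ_l \bmod p$, Theorem \ref{lubin_lemma} (Lubin's classification of order-$p$ characters) supplies a single $u$ with ${}_u\chi \equiv x_l\FZ_l \pmod{p}$ \emph{and} $\chi(u(t)/t)\equiv 0 \pmod{p}$, and Lemma \ref{lubin_lemma_2} with $n=2$ upgrades this to strict equivalence; the claim $b_m=a_m$ then comes from Lemma \ref{evaluation}(b), exactly as in your last step. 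You instead re-derive the needed direction of Theorem \ref{lubin_lemma} by explicit elimination, and your bookkeeping checks out: the two-parameter move $t(1+c\,t^{l-i}+c'\,t^l)$ gives ${}_u\chi(E_i)\equiv x_i+ic\,x_l \pmod{p}$ with all coefficients at indices $k>i$ untouched modulo $p$ (corrections to $E_k\circ u$ sit in degree at least $k+l-i$); the parameter $c'$ perturbs $E_k\circ u$ only in degree $k+l>l$, hence is invisible to every $\FZ_k$-coefficient modulo $p$, while contributing the term $c'x_l$ to $\chi(u(t)/t)$, so the kernel condition of Lemma \ref{lubin_lemma_2} can be solved after $c$ is fixed because $x_l$ is a unit; and since $p\nmid i_0$, the pivot $i_0x_l$ is invertible, so the downward induction on the largest surviving low index terminates, with transitivity of $\simeq$ assembling the steps. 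What the paper's route buys is brevity and reuse of Lubin's machinery; what yours buys is a self-contained argument that never invokes the order-$p$ classification, and whose decoupling device (one free parameter reserved for the condition $\chi(u(t)/t)\equiv 0$, enabled by the unit $x_l$ or $b_m$) is precisely the mechanism the paper itself deploys later in Proposition \ref{prop7} via the moves $t(1+t^{l+j})^d(1+t^m)^e$ --- so your proof in effect treats Lemma \ref{lem7} and Proposition \ref{prop7} by one uniform technique.
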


\begin{proof}
By Theorem \ref{lubin_lemma},
$\chi \pmodp \simeq x_l \FZ_\ell\pmodp$, so that, by definition, there exists an element $u\in\notty$ such that 
$x_l \FZ_\ell\equiv {}_u \chi\pmodp$ and $\chi(u(t)/t)\equiv 0\pmodp$. By Lemma \ref{lubin_lemma_2} for $n=2$,
$\chi$ is strictly equivalent to ${}_u \chi$ and the latter is of the form \eqref{row2}.
The last assertion follows from Lemma \ref{evaluation}(b).
\end{proof}

The form \eqref{row2} is strictly equivalent to a even more simple form.

\begin{proposition}\label{prop7}
Let $\chi$ be a character of type $\langle l , m \rangle$ with the standard expansion \eqref{row2}.
Then $\chi$ is strictly equivalent to a character with the standard expansion
\begin{equation}\label{row20}
x_l \FZ_l + \sum_{ m-l \leq j \leq m, p \nmid j} b_j \cdot p \FZ_j,
\end{equation}
where the coefficients 
$x_l$ and $b_j$ for
$m-l \leq j \leq m$, $p \nmid j$
are the coefficients in \eqref{row2}.
\end{proposition}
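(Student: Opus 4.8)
The plan is to remove the low-degree terms $b_j\,p\FZ_j$ with $j<m-l$ one at a time, from the largest such $j$ downward, by conjugating with elementary automorphisms. Fix a target index $j$ with $p\nmid j$ and $1\le j<m-l$, and consider $u(t)=t(1+\gam t^{m-j})\in\notty$ with $\gam\in\F_p$ to be chosen. Since the depth $m-j$ of $u(t)/t=1+\gam t^{m-j}$ satisfies $m-j>l$, we have $u(t)/t\in U_{m-j}\subseteq U_{l+1}$, so $\chi(u(t)/t)\in p\Z/p^2\Z$; hence $\chi(u(t)/t)\equiv 0\pmodp$ and, by Lemma \ref{lubin_lemma_2} with $n=2$, conjugation by $u$ is a strict equivalence $\chi\simeq\ub{u}\chi$ for every such $\gam$. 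It therefore suffices to choose $\gam$ so that $\ub{u}\chi$ kills the coefficient at $j$ without disturbing the window $m-l\le i\le m$.

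The computation that drives everything is the factorization $E_i\circ u=E_i\cdot w_i$, where $w_i:=E_i^{-1}(E_i\circ u)=1+i\gam t^{i+(m-j)}+\cdots\in U_{i+(m-j)}$; evaluating the homomorphism $\chi$ gives $\ub{u}\chi(E_i)=\chi(E_i)+\chi(w_i)$. Three regimes occur. For $i>j$ we have $i+(m-j)>m$, so $w_i\in U_{m+1}$; since $m$ is the second break, $\chi$ vanishes on $U_{m+1}$ and the coefficient at $i$ is unchanged---in particular the entire window $[m-l,m]$, the special term $x_l\FZ_l$ when $l>j$, and all higher indices already cleared are left intact. For $i=j$ we get $w_j\in U_m$ with $w_j\equiv(1+t^m)^{j\gam}\pmod{U_{m+1}}$, whence $\chi(w_j)=j\gam\,\chi(1+t^m)$. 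For $i<j$ one checks $i+(m-j)>l$, so $\chi(w_i)\in p\Z/p^2\Z$ and only new $p$-multiple ``junk'' strictly below $j$ is produced; the shape \eqref{row2} is thus preserved, the unique non-$p$ coefficient staying at $l$ (and even there only its $p$-part can move).

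Because $m$ is exactly the second break, $\chi(1+t^m)$ is a nonzero element of $p\Z/p^2\Z$, say $p\theta$ with $\theta\in(\Z/p\Z)^\times$; this is the ``fuel''. The change at index $j$ is $\ub{u}\chi(E_j)-\chi(E_j)=j\gam\,p\theta$, so choosing $\gam\equiv -b_j\,(j\theta)^{-1}\pmodp$, possible since $p\nmid j$, makes the coefficient $b_j\,p$ at $\FZ_j$ vanish. Iterating from the largest junk index downward, each step zeroes the current largest nonzero index below $m-l$ and only creates new terms at strictly smaller indices; hence the maximal nonzero junk index strictly decreases and the process terminates in the reduced form \eqref{row20}, with the window coefficients $x_l$ and $b_{m-l},\dots,b_m$ unchanged.

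The main technical point to nail down is the second-regime identity $w_j\equiv(1+t^m)^{j\gam}\pmod{U_{m+1}}$, i.e.\ that the coefficient of $t^m$ in $w_j$ is exactly $j\gam$ with no higher-order-in-$\gam$ corrections: this rests on the observation that $u(t)^j=t^j(1+\gam t^{m-j})^j$ has nonzero coefficients only in degrees $j,\,m,\,2m-j,\dots$, so the only contribution to degree $m$ is the linear term $j\gam t^m$, the cross-terms coming from $(1+t^j)^{-1}$ cancelling. A secondary point is the uniform treatment of the case $p\mid m$: there $E_m$ is not a basis element and there is no literal $a_m$, yet the argument needs only $\chi(1+t^m)\neq0$, which holds precisely because $m$ is the second break; thus no separate analysis of $p\mid m$ versus $p\nmid m$ is required.
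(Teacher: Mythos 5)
Your proof is correct and follows essentially the same strategy as the paper's: a downward induction that kills each junk coefficient below $m-l$ by conjugating with an element of depth $m-j$, using the nonvanishing of $\chi(1+t^m)$ (the paper's $b_m\cdot p=\chi(E_m)$) as the fuel, and checking that all indices above the target are undisturbed. The only difference is a mild simplification: where the paper multiplies its conjugator by an extra factor $(1+t^m)^e$ to force $\chi(u_j(t)/t)=0$ exactly, you drop that factor and instead invoke Lemma \ref{lubin_lemma_2} with $n=2$, which only requires $\chi(u(t)/t)\equiv 0\pmodp$ --- automatic here since $u(t)/t\in U_{m-j}\subseteq U_{l+1}$; you are also somewhat more explicit than the paper in justifying $\chi(1+t^m)\neq 0$ uniformly in the case $p\mid m$.
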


\begin{proof}
 Let $\chi$ be the character of type $\langle l , m \rangle$ in \eqref{row2}.  Then there exists $b_m\in\{1,...,p-1\}$ such that 
$$b_m\cdot p =\chi(E_m).$$ 
By induction, it suffices to find, for each index $j$ starting from $1$ to $m-l-1$  with $m-l-j$ relative prime to $p$, an element $u_j:=u_j(t)\in\notty$ satisfying three conditions:
\smallskip
\begin{enumerate}[(a)]
\item ${}_{u_j} \chi(E_{m-l-j})=0;$
\smallskip
\item $\chi(u_j(t)/t)=0$; 
\smallskip
\item $\chi(1+t^k)=\chi(1+u_j^k)$
if $m-l-j<k\leq m$.
\smallskip
\end{enumerate}

Consider 
$$u_j :=u_j(t)=t(1+t^{l+j})^d(1+t^m)^e\in\notty,$$ 
where $d$ and $e$ are integers to be chosen later. Then, we obtain
\begin{align*}
1 + u_j^{m-l-j} 
&= 1 + t^{m-l-j}(1 + t^{l+j})^{d (m-l-j)} (1 + t^m)^{e (m-l-j)} \\
&= 1 + t^{m-l-j} + d (m-l-j)t^m  \pmod{U_{m+1}} \\
&= (1 + t^{m-l-j})(1 + t^m)^{d (m-l-j)} \pmod{U_{m+1}}
\end{align*}

Therefore 
\begin{align}\label{compute}
\begin{split}
\ub{u_j} \chi(E_{m-l-j}) &= \chi(1 + u_j^{m-l-j}) \\
& = p\cdot (b_{m-l-j} + d(m-l-j)  b_m )
\end{split}
\end{align}
Since $b_m$ and $m-l-j$ are both relative prime to $p$, there exists some $d\in\N$ such that \eqref{compute} is zero in $\Z/p^2\Z$, i.e., (a) is fulfilled. 
On the other hand, as $l+j>l$ we have
\begin{align}\label{compute2}
\begin{split}
\chi(u_j(t)/t) &= \chi((1 + t^{l+j})^d (1 + t^m)^e) \\
&= p\cdot( d b_{l+j} + e b_m).
\end{split}
\end{align}
Again since $b_m$ is prime to $p$, there exists $e\in\N$ such that \eqref{compute2} is zero in $\Z/p^2\Z$, i.e., (b) is fulfilled.
Finally, one checks easily that (c) is also fulfilled by the definition of $u_j$. 
We are done.
\end{proof}

\begin{customthm} {\ref{main_thm}}
Let $[\chi]$ be a strict equivalence class of type $\langle l, m \rangle$.  Then there exists a character in the class $[\chi]$ whose standard expansion is 
\begin{equation*}
x_l \FZ_l + \sum_{ m-l \leq j \leq m, p \nmid j} b_j \cdot p \FZ_j, 
\tag{\ref{intro_reduction}}
\end{equation*}
where $x_l \neq 0$ and
if $p \nmid m$, then $b_m \neq 0$. 
The character in \eqref{intro_reduction} is said to be in \textit{reduced form}.
\end{customthm}

\begin{proof}
The assertion follows directly from Lemma \ref{lem7} and Proposition \ref{prop7}.
\end{proof}

\begin{corollary}\label{main_cor}
Let $B(p,l,m)$ be  the number of type $\langle l, m \rangle$ reduced forms \eqref{intro_reduction}.
The number of strict equivalence classes of type $\langle l, m \rangle$ 
 is at most $B(p,l,m)=p^k (p-1)^\epsilon$, where $k$ is the number of integers in $[m-l,m-1]$ incongruent to $0$ modulo $p$, and $\epsilon$ is equal to $1$ if $m$ is congruent to $0$ modulo $p$, and equal to $2$ otherwise.
\end{corollary}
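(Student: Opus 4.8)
The plan is to combine Theorem \ref{main_thm} with a direct enumeration of the reduced forms.

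First I would establish the inequality. By Theorem \ref{main_thm}, every strict equivalence class of type $\langle l, m \rangle$ contains at least one character in the reduced form \eqref{intro_reduction}. Hence sending a reduced form to its strict equivalence class defines a surjection from the finite set of type $\langle l, m \rangle$ reduced forms onto the set of strict equivalence classes of that type. Consequently the number of such classes is at most the number of reduced forms, which is $B(p,l,m)$ by definition.

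It then remains to count the reduced forms. A reduced form \eqref{intro_reduction} is determined by the coefficient $x_l$ together with the coefficients $b_j$ indexed by those $j$ satisfying $m-l \le j \le m$ and $p \nmid j$, each coefficient lying in $\{0, 1, \ldots, p-1\}$. The only constraints are $x_l \neq 0$, contributing $p-1$ choices, and, when $p \nmid m$, the requirement $b_m \neq 0$. I would split into two cases according to the residue of $m$ modulo $p$. If $p \mid m$, the index $j = m$ is excluded from the range, so the free coefficients $b_j$ are exactly those with $m-l \le j \le m-1$ and $p \nmid j$; there are $k$ of these, each with $p$ choices, giving $B(p,l,m) = (p-1) p^k$ and $\epsilon = 1$. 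If $p \nmid m$, the index $j = m$ is included and carries the nonvanishing constraint ($p-1$ choices for $b_m$), while the remaining indices with $m-l \le j \le m-1$ and $p \nmid j$ again number $k$ and are unconstrained; this yields $B(p,l,m) = (p-1)^2 p^k$ and $\epsilon = 2$. In either case $B(p,l,m) = p^k (p-1)^\epsilon$.

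The argument is essentially bookkeeping, and I expect no serious obstacle. The one point demanding care is the distinguished role of the top index $j = m$: whether it lies in the range $\{\, j : m-l \le j \le m,\ p \nmid j \,\}$ and whether it carries the constraint $b_m \neq 0$ is exactly what toggles $\epsilon$ between $1$ and $2$, so the case division according to $m$ modulo $p$ must be made cleanly. Note that $k$ may be zero, in which case $p^k = 1$ and all the counts remain valid.
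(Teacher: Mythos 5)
Your proposal is correct and follows essentially the same route as the paper: the inequality comes from Theorem \ref{main_thm} (every class contains a reduced form, hence a surjection from reduced forms onto classes), and the formula $p^k(p-1)^\epsilon$ is obtained by the same case split on $m$ modulo $p$, with $x_l\neq 0$ contributing $p-1$, the constraint $b_m\neq 0$ contributing the second factor of $p-1$ exactly when $p\nmid m$, and the $k$ unconstrained coefficients $b_j$ with $m-l\le j\le m-1$, $p\nmid j$, contributing $p^k$. Your write-up is merely more explicit than the paper's terse proof, including the observation that $k=0$ is harmless.
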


\begin{proof}
If $m$ is incongruent to $0$ modulo $p$, then each of $x_l, b_m$, being nonzero, can be chosen in $p-1$ different ways, otherwise
when $m$ is congruent to $0$ modulo $p$ the coefficient $a_m$ does not appear in the
standard expansion and so that only $x_l$ needs to be chosen, which can be done in  $p-1$ ways. The assertion about the other factor $p^k$, with restriction on $k$, is evident.
\end{proof}

\begin{customthm}{\ref{exact}}
Suppose $l < p$. Then each strict equivalence class of type $\langle l, m \rangle$ has precisely one representative in the reduced form \eqref{intro_reduction} in Theorem \ref{main_thm}.
\end{customthm}

\begin{proof}
Suppose $l<p$ and $\chi$ is in reduced form  \eqref{intro_reduction}. Let $u=u(t)\in\notty$ satisfy 
\begin{equation}\label{kern}
\chi(u(t)/t)=0
\end{equation}
and that
\begin{equation}\label{long}
\chi' = \ub{u} \chi = x'_l \FZ_l + \sum_{ m-l \leq j \leq m, p \nmid j} b_j' \cdot p \FZ_j
\end{equation}
is also in reduced form but distinct from  $\chi$. By Lemma \ref{evaluation}(a) and (b), we obtain $x_l = x'_l$ and if $p \nmid m$, then $b_m = b_m'$. If $\widetilde{j}$ denotes the the largest index $j$ such that $b_j\neq b'_j$, then $\widetilde{j}$ is strictly less than $m$, so that  $0<m-\widetilde{j}\leq l$. Write 
$$u(t)=t\prod_{k=1}^\infty (1+t^k)^{n_k}$$
where $n_k\in\{0,1,...,p-1\}$. If  
$\widetilde{k}$ denotes the smallest index $k$ such that 
$n_k  \neq 0$, then the inequalities $0<\widetilde{k}\leq m-\widetilde{j}$ hold; indeed if $\widetilde{k} > m - \widetilde{j}$, then $b_{\widetilde{j}} = b'_{\widetilde{j}}$ which contradicts the definition of $\widetilde{j}$. Hence, we conclude that
$$0<\widetilde{k}\leq  m-\widetilde{j}\leq l<p.$$

The equality $\widetilde{k}=l$ is impossible since it contradicts \eqref{kern}.
It follows that
$0< l-\widetilde{k}<p$, and together with  $x_l,n_{\widetilde{k}}\in \{1,...,p-1\}$, we obtain
\begin{align*}
\begin{split}
\chi'(E_{l-\widetilde{k}})=\ub{u}
\chi(1+t^{l-\widetilde{k}})
&=\chi(1+t^{l-\widetilde{k}}(1+n_{\widetilde{k}}t^{\widetilde{k}}+\cdots)^{l-\widetilde{k}})\\
&=\chi(1+t^{l-\widetilde{k}}+n_{\widetilde{k}}(l-\widetilde{k}) t^l+\cdots)\\
&=\chi((1+t^{l-\widetilde{k}})(1+ t^l)^{n_{\widetilde{k}}(l-\widetilde{k})}\cdots)\\
&= n_{\widetilde{k}}(l-\widetilde{k})x_l\neq 0 ~\text{mod $p$},
\end{split}
\end{align*}
contrary to the hypothesis that $\chi'$ is in reduced form.
\end{proof}

\begin{remark}
Consider $p=2$ and the type $\langle 5,15\rangle$ character $\chi$ in reduced form:
$$\chi= \FZ_5 + 2\FZ_{15}.$$
Take $u(t)=t(1+t^3+t^4)(1+t^{15})^e \in\notty$ so that $\chi(u(t)/t)=0$. 
Then ${}_u \chi$ is of the form \eqref{row2} such that ${}_u \chi(E_{11})=2$. By Proposition \ref{prop7}, ${}_u \chi$ is strictly equivalent to a reduced form $\psi$ with $\psi(E_{11})=2\neq 0= \chi(E_{11})$. 
But $\psi$ and $\chi$ are strictly equivalent and both are in reduced forms.
\end{remark}

\section{Torsion elements of $\notty$ of order $p^2$} \label{torsion_elements}

\begin{customthm}{\ref{intro_final_thm}}
\label{final_thm}
Let $d_{l,m}$ denote the number of conjugacy classes of elements of the Nottingham group $\notty$ that are of order $p^2$ and type $\langle l, m \rangle$. Then
\begin{equation*}
d_{l,m} \leq B(p,l,m)=p^k (p-1)^\epsilon
\tag{\ref{ineq}}
\end{equation*}
where $k$ is the number of integers in $[m-l,m-1]$ incongruent to $0$ modulo $p$, and $\epsilon$ is equal to $1$ if $m$ is congruent to $0$ modulo $p$, and equal to $2$ otherwise. The inequality \eqref{ineq} is an equality if $l<p$.
\end{customthm}

\begin{proof}
The result is an immediate consequence of Theorem \ref{prior_thm_1} of Lubin, Corollary \ref{main_cor},
and Theorem \ref{exact}.
\end{proof}

\begin{remark}
Since $l<p$ always holds when $l=1,2$, Theorem \ref{intro_final_thm} 
recovers the formula of $d_{1,m}$
in Theorem \ref{prior_thm_2}
when $\kappa=\F_p$
and 
is an improvement over Theorem \ref{prior_thm_3} of the second author in which only the upper bound on $d_{2,m}$ was established.
\end{remark}

\begin{customthm}{\ref{conjugate}}
Let $n\in\N$ and $u\in\notty$ be a torsion element of order $p^2$ and type $\langle l,m\rangle$. If
$u$ and $u^n$ are distinct elements,
then they are conjugate in $\notty$ if and only if $n\equiv 1 \pmodp$ and $(p,l,m)\neq (2,l,2l)$.
\end{customthm}

\begin{proof}
Let $\chi$ be a surjective character of type $\langle l,m\rangle$ associated to $u$.\\

($\Rightarrow$)
Suppose $u$ and $u^n$ are conjugate in $\notty$. It follows that $n$ is prime to $p$.
Then one sees from the correspondence in $\mathsection2$ that $n\cdot\chi$ is a surjective character associated to $u^n$.
Let $x_l\neq0$ be $\chi(E_l)\pmodp$. As $u$ and $u^n$ are conjugate, $\chi$ and $n\cdot\chi$ are strictly equivalent. Hence by Lemma \ref{evaluation}(a), we have
$$x_l\equiv n x_l\pmodp$$
which implies $n\equiv 1 \pmodp$.

On the other hand, assume $(p,l,m)=(2,l,2l)$ for some odd $l$, we need to prove that $u$ and $u^n$ are not conjugate.
Let $\chi_1$ be a reduced form of $u$.
Since $m-l=l$ is prime to $p=2$, the character 
$n\cdot \chi_1$ is a reduced form of $u^n$.
If $u$ and $u^n$ are conjugate, then 
$n$ is odd.
Since $u\neq u^n$ and $p^2=4$, we may assume $n=3$.
It follows that the reduced forms $\chi_1 \neq 3\chi_1$ differ only at the coefficient $b_l$ (see \eqref{row20}) and 
${}_w \chi_1 = 3\chi_1$ for some 
$w\in\notty$ satisfying
\smallskip
\begin{enumerate}[(a)]
    \item $w:=w(t)\neq t$;
    \item $\chi_1(w(t)/t)=0$.
\end{enumerate}  
\smallskip
The conditions (a) and (b) imply that $0<k< l$ if we write $w(t)=t(1+t^k+\cdots)\in\notty$.
If $k$ is odd, then one computes by 
using $\chi_1 (E_{2l})\neq 0$ and $\chi_1$ is a reduced form that
$${}_w \chi_1 (E_{2l-k})=\chi_1 (E_{2l-k}) + \chi_1 (E_{2l})\neq  \chi_1 (E_{2l-k})=3\chi_1 (E_{2l-k}).$$
If $k$ is even, then one computes by using $\chi_1$ is a reduced form that
$${}_w \chi_1 (E_{l-k})\equiv
\chi_1 (E_{l})\not\equiv 0 \equiv 3\chi_1 (E_{l-k}) ~~(\mathrm{mod}~2).$$
Since both cases contradict the equation ${}_w \chi_1 = 3\chi_1$,
$u$ and $u^n$ are not conjugate.\\

($\Leftarrow$) Without loss of generality, assume $\chi$ is in reduced form. 
Then $n\cdot \chi$ is in the form of \eqref{row2}.
By Proposition \ref{bs_lemma} and  Remark \ref{degenerate},
$(p,l,m)\neq (2,l,2l)$ is equivalent to $m-l>l$. 
Apply Proposition \ref{prop7} to $n\cdot \chi$ and we obtain a reduced form $\psi\simeq n\cdot \chi$. Since  $m-l>l$ and 
$n\equiv 1 \pmodp$,
it follows that $\psi$ is identical to $\chi$, which implies that $n\cdot \chi\simeq \chi$. Therefore, $u^n$ and $u$ are conjugate in $\notty$.
\end{proof}

\bigskip

\begin{center}
\begin{tabular}{ll} 
Chun Yin Hui  & Krishna Kishore \\ 
Yau Mathematical Sciences Center    & Dept. of Mathematics \\
Tsinghua University  & Indian Institute of Technology-Dharwad\\
Haidian District, Beijing 100084,  &  WALMI Campus, Dharwad\\
China  & Karnataka 580011, India\\
\smallskip
\email{pslnfq@gmail.com} & \email{kishore@iitdh.ac.in}
\end{tabular}
\end{center}

\end{document}